\newtheorem{prop}{Proposition}
\newtheorem{thm}{Theorem}
\newtheorem{lemma}{Lemma}
\newtheorem{rem}{Remark}
\author{Zhangjie Wang}
\newcommand{\En}{E^{(n)}}
\newcommand{\tdep}{{\tilde\epsilon}}
\newcommand{\Leg}[2]{{\brlr{\frac{#1}{#2}}}}
\newcommand{\ALeg}[2]{{\Sqlr{\frac{#1}{#2}}}}
\newcommand{\arr}{\ar@}
\newcommand{\arro}{\ar@/}
\newcommand{\uset}{\underset}
\newcommand{\oset}{\overset}
\newcommand{\vphi}{{\varphi}}
\newcommand{\ABlr}[1]{{\left|#1 \right|}}
\newcommand{\ABBig}[1]{{\Big| #1\Big|}}
\newcommand{\brlr}[1]{{\left( #1\right)}}
\newcommand{\brbig}[1]{{\big(#1 \big)}}
\newcommand{\brBig}[1]{{\Big(#1 \Big)}}
\newcommand{\Brbig}[1]{{\big\{ #1\big\}}}
\newcommand{\BrBig}[1]{{\Big\{ #1\Big\}}}
\newcommand{\Sqlr}[1]{{\left[#1 \right]}}
\font\cyr=wncyr10 scaled \magstep 1 
\newcommand{\Sha}{\mbox{\cyr X}} 
 \renewcommand{\Im}{{\mathrm{Im}}}
 \newcommand{\rank}{{\mathrm{rank}}}
\renewcommand{\Re}{{\mathrm{Re}}} 
  \newcommand{\Li}{{\mathrm{Li}}}
\renewcommand{\mod}{\mathrm{mod}}
\newcommand{\half}{\frac{1}{2}}
 \newcommand{\tdP}{{\tilde {P}}}
\newcommand{\tdQ}{{\tilde {Q}}}
 \newcommand{\tdn}{{\tilde {n}}}
 \newcommand{\tdt}{{\tilde {t}}}
\newcommand{\tdeta}{{\tilde {\eta }}}
 \newcommand{\bF}{{\mathbb {F}}}
 \newcommand{\bN}{{\mathbb {N}}}
\newcommand{\bQ}{{\mathbb {Q}}}
 \newcommand{\bZ}{{\mathbb {Z}}}
\newcommand{\cA}{{\mathcal {A}}} \newcommand{\cB}{{\mathcal {B}}}
\newcommand{\cO}{{\mathcal {O}}} \newcommand{\cP}{{\mathcal {P}}}
 \newcommand{\cR}{{\mathcal {R}}}
\newcommand{\cS}{{\mathcal {S}}}
\newcommand{\fA}{{\mathfrak {A}}}
\newcommand{\fa}{{\mathfrak {a}}} \newcommand{\fb}{{\mathfrak {b}}}
\newcommand{\fc}{{\mathfrak {c}}}
 \newcommand{\fp}{{\mathfrak {p}}}
\numberwithin{equation}{section}
\title{Congruent Elliptic Curves with Non-trivial Shafarevich-Tate Groups: Distribution Part}
\begin{document}
\maketitle

\begin{abstract}

We study the distribution  of a subclass congruent elliptic curve $\En: y^2=x^3-n^2x$, where  $n$ is congruent to $1\pmod 8$ with all prime factors congruent to $1\pmod 4$. We prove an independence of residue symbol property. Consequently we get the distribution of  rank zero such $\En$ with $2$-primary part of Shafarevich-Tate group  isomorphic to $\brbig{\bZ/2\bZ}^2$.  We also obtain a lower bound of the number of such $\En$ with rank zero and $2$-primary part of Shafarevich-Tate group  isomorphic to $\brbig{\bZ/2\bZ}^{4}$.

\end{abstract}

\begin{CJK}{GBK}{kai}
\maketitle

\section{Introduction and Main Theorem}

A positive integer $n$ is called a congruent number if it is the area of a right triangle with rational side lengthes; or equivalently, the Mordell-Weil group of the elliptic curve $E^{(n)}: y^2=x^3-n^2x$ has positive rank. Let $E$ be the elliptic curve over $\bQ$ defined by $y^2=x^3-x$, then $E^{(n)}$ is a quadratic twist of $E$.
We are interested in the behavior of arithmetic groups such as Mordell-Weil groups and Shafarevich-Tate groups   in the quadratic twist family of $E$.

Goldfeld  conjectured  that for any elliptic curve over $\bQ$ there are $50\%$ elliptic curves with Mordell-Weil rank $0$ and  $1$ respectively in its quadratic twist family. So far, this conjecture hasn't been verified for any single elliptic curve. The modular curve $X_0(19)$ has genus one and its cusp at $\infty$ is rational over $\bQ$. For the elliptic curve $(X_0(19),[\infty])$,  Vatsal \cite{vatsal1998rank1}
has proved that  there are positive portion rank $0$  elliptic curves in its quadratic twist family, and so is rank $1$.

In this paper, we consider the distribution of congruent elliptic curves  $E^{(n)}$ with Mordell-Weil rank $0$ and non-trivial $2$-primary Shafarevich-Tate groups. For a positive integer $k$, we denote  $Q_k$  to be the set of positive square-free integers $n$ satisfying:
\begin{itemize}
\item $n\equiv 1   \pmod 8$  with exactly $k$ prime factors;
\item any prime factor of $n$ is congruent to $1$ modulo $4$.
\end{itemize}

Our main result in this paper is the following.
\begin{thm}[]\label{mainthm0}
For any positive integer $k$, let $Q_k(x)$ be the set of integers $n\in Q_k$ with $n\le x$, and $P_k(x)$ consist of those  $n\in Q_k(x)$ satisfying
$$\rank_\bZ \En(\bQ)=0, \qquad \Sha(\En/\bQ)[2^\infty]\simeq \brbig{\bZ/2\bZ}^2$$
where $\En(\bQ)$ is the Mordell-Weil group of $\En$ and $\Sha(\En/\bQ)$ is the Shafarevich-Tate group of $\En$. Then
\[\lim_{x\to\infty}\frac{\#P_k(x)}{\#Q_k(x)}=\half \brBig{u_k+ (2^{-1}-2^{-k})u_{k-1}}\]
where $\displaystyle{u_k:=\prod_{i=1}^{\lfloor \frac{k}{2}\rfloor } (1-2^{1-2i})}$ is decreasing to a limit approximate to $0.419$, here $\lfloor \frac{k}{2}\rfloor$ is the maximal integer less or equal to $k/2$.
\end{thm}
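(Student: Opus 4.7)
The plan is to turn the arithmetic condition ``$\rank_\bZ \EQ=0$ and $\Sha(\En/\bQ)[2^\infty]\simeq(\bZ/2\bZ)^2$'' into an explicit linear-algebraic condition on a Monsky-type alternating $\bF_2$-matrix $M_n$ whose entries come from the Legendre symbols $\Leg{p_i}{p_j}$ between the prime factors $p_1,\dots,p_k$ of $n$; to prove the ``independence of residue symbols'' property, which asserts that these $\binom{k}{2}$ symbols become asymptotically jointly uniform as $n$ ranges over $Q_k(x)$; and finally to carry out the resulting matrix count.

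First, via complete $2$-descent along the rational $2$-torsion $\En[2]=\Brlr{O,(0,0),(\pm n,0)}$, one writes the two $2$-isogeny Selmer groups as kernels of explicit matrices over $\bF_2$ (the standard Tian--Heath-Brown--Monsky formalism in the congruent case). For $n\in Q_k$, the assumptions $n\equiv 1\pmod 8$ and $p_i\equiv 1\pmod 4$ trivialize the local conditions at $2$ and make $\Leg{p_i}{p_j}$ symmetric in $i,j$ by quadratic reciprocity, so one obtains
\[\dimt\Sel_2(\En/\bQ)=2+\dimt\ker M_n\]
for a $k\times k$ alternating matrix $M_n$ with off-diagonal entries $\half\brlr{1-\Leg{p_i}{p_j}}$. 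The rank-zero part of the target condition then amounts to $\dimt\ker M_n=2$, while ``$\Sha[2^\infty]\simeq(\bZ/2\bZ)^2$'' as opposed to something larger is the non-degeneracy of the Cassels--Tate pairing on $\Sha(\En/\bQ)[2]$, which, by the companion arithmetic results in this paper, is again expressible through Legendre-symbol data among the $p_i$.

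Second, I would prove the independence of residue symbols. Fix a sign vector $\epsilon\in\Brlr{\pm 1}^{\binom{k}{2}}$ and write the indicator of ``$\Leg{p_i}{p_j}=\epsilon_{ij}$ for all $i<j$'' as a product of quadratic Dirichlet characters. Counting $n=p_1\cdots p_k\le x$ in $Q_k(x)$ subject to these character constraints is a standard $k$-variable Selberg--Delange / Landau-type problem: Siegel--Walfisz handles the residue classes modulo $8$ and modulo the $p_i$, and the reciprocity symmetry $\Leg{p_i}{p_j}=\Leg{p_j}{p_i}$ cuts the number of independent bits down to exactly $\binom{k}{2}$. The conclusion is that each of the $2^{\binom{k}{2}}$ sign patterns is attained by $(1+o(1))2^{-\binom{k}{2}}\#Q_k(x)$ integers, so $M_n$ is asymptotically uniform on $\mathrm{Alt}_k(\bF_2)$.

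Finally, with $M_n$ uniform, the density is the proportion of alternating $\bF_2$-matrices of size $k$ satisfying both conditions above. The classical counting of $M\in\mathrm{Alt}_k(\bF_2)$ by corank via the symplectic-group formula produces the factors $\prod_{i}(1-2^{1-2i})$; the specific combination $\half\brBig{u_k+(2^{-1}-2^{-k})u_{k-1}}$ emerges upon splitting according to whether the row/column coming from the special prime $2$ lies in the kernel of $M_n$ (giving $u_k$) or not (giving $(2^{-1}-2^{-k})u_{k-1}$), and then multiplying by the $\half$ for the non-degenerate-Cassels--Tate bit, which on a $2$-dimensional $\bF_2$-space is a single binary invariant. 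The main obstacle is the second step of the descent: showing that Cassels--Tate non-degeneracy is captured by one further Legendre-symbol-valued invariant that, once $\ker M_n$ is fixed, is still equidistributed and independent of $M_n$, so that the probability $\half$ multiplies cleanly; without this independence only one-sided bounds (as in the $(\bZ/2\bZ)^4$ case mentioned in the abstract) are accessible, and exact densities are out of reach.
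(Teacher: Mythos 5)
Your high‑level plan matches the paper's in outline — reduce to matrix conditions, establish an independence‑of‑residue‑symbols statement, then count matrices — but there is a decisive gap at exactly the point you yourself flag as the ``main obstacle,'' and it is not a Legendre‑symbol obstacle.

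The paper does not run a Monsky/Cassels--Tate descent on an alternating matrix. It characterizes $n\in P_k$ (via the companion paper) by conditions on the $4$‑rank and $8$‑rank of the ideal class group of $\bQ(\sqrt{-n})$: $h_4(n)=1$ together with a parity constraint on $h_8(n)$. The $h_4(n)=1$ part does reduce to the rank of a symmetric $\bF_2$ R\'edei matrix with zero row sums built out of Legendre symbols, and is morally the $\dimt\ker M_n$ condition you describe (though the matrices are symmetric with row sum $0$, not alternating). But the $h_8(n)$ condition — the part you attribute to Cassels--Tate non‑degeneracy and assume is ``one further Legendre‑symbol‑valued invariant'' — is expressed via Jung--Yue through \emph{quartic} residue symbols: one needs to control $\brBig{\frac{2d}{n/d}}_4\Leg{2n/d}d_4$. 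This is not a bit built from $\Leg{p_i}{p_j}$. As a result, the needed independence statement (the paper's Theorem~\ref{mainthm2}) is about quartic residue symbols over $\bZ[i]$, and proving it requires extending Cremona--Odoni's inductive sieve to $\bQ(i)$: explicit formula for $\psi(x,\chi)$ over $\bQ(i)$, Page's theorem and Siegel's bound for Hecke $L$‑functions to tame the exceptional zero, and a transition between rational primes and primary Gaussian primes. Your proposal replaces all of that with a one‑line appeal to Siegel--Walfisz and ``one more independent binary bit,'' which would not give the $\half$ factor without the substantial additional work, and indeed would not even express the $\Sha$ condition in terms of quantities accessible to a Legendre‑symbol equidistribution result. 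The final matrix count is also organized differently (the paper splits by $\rank A_n = k-1$ versus $k-2$, using the Brown et al.\ count of symmetric matrices of given rank, rather than by whether the ``row from $2$'' lies in $\ker M_n$), but that is a presentational difference; the quartic‑residue‑symbol independence is the genuine missing ingredient.
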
 

Similar distribution result for the congruent elliptic curves $\En$ with rank $0$ and  $\Sha(\En/\bQ)[2^\infty]\simeq \brbig{\bZ/2\bZ}^4$ is proved in Theorem \ref{mainthm3}. Now we explain the strategy to prove this theorem:
\begin{itemize}
\item By our previous paper  \cite{wzj2015congruent}: $n\in P_k(x)$ can be characterized with $8$-rank of ideal class group of $\bQ(\sqrt{-n})$, then   Jung-Yue \cite{yueJung2011eightrank} reduces this to quartic residue symbol;
\item We count according to $p_i \pmod {16}$ and the residue symbol of prime factors of $n$, which is reduced to count the number of certain matrix over $\bF_2$ by independence of residue symbol Theorem \ref{mainthm2}.
\end{itemize}

To explain the independence of residue symbol property,  we first introduce some notations. For $d\in Q_k$  and $q$ an integer such that $\Leg qp=1 $ for all prime divisor $p$ of $d$, we denote $\Leg{q}d_4$ to  be the quartic residue symbol  defined in  \S2.2. For an odd integer $a$, we define  $\ALeg2{a}=1$ if $a\equiv\pm5\pmod8$ and $0$ else. Let $k$ be a positive integer, we denote $C_k(x)$  to be  all positive square-free integers $n\le x$ with exactly $k$ prime divisors. Let $\alpha=(\alpha_1,\cdots,\alpha_k)$ with $\alpha_l\in \Brbig{ 1, 5, 9, 13 }$ and $\prod_{l=1}^k\alpha_l\equiv 1\pmod 8$, and  $B$ be a $k\times k$ symmetric $\bF_2$-matrix  with rank $k-1$ and every row sum $0$. Then $\fb=\brlr{\ALeg{2}{\alpha_1},\cdots, \ALeg2{\alpha_k}}^T$ lies in the image of $B$ viewed as a linear transform over $\bF_2^k$. Moreover $By=\fb$ has two different
solutions $y,y'\in \bF_2^k$ with $y+y'=(1,\cdots,1)^T$. We assume $z=(z_1,\cdots,z_k)^T$ to be the one of $y,y'$ such that $z_1=1$.  Then we define $C_k(x,\alpha,B)$ to be all $n=p_1\cdots p_k\in C_k(x)$ with $p_1<\cdots<p_k$ satisfying the following conditions:
\begin{itemize}
\item $p_l\equiv \alpha_l  \pmod {16}$ for $ 1\le l\le k$;
\item The Legendre symbol $\Leg{p_l}{p_j}=(-1)^{B_{lj}}$ for all $1\le l<j\le k$;
\item $\brBig{\frac{2d}{n/d}}_4\Leg{2n/d}d_4=(-1)^{\frac{n-1}8+\frac{d-5}4}$  with $d=\prod p_l^{z_l}$.
\end{itemize}


Now we can state the independence  of residue symbol property:
\begin{thm}\label{mainthm2} Let $\alpha=(\alpha_1,\cdots,\alpha_k)$ with $\alpha_l\in \Brbig{ 1, 5, 9, 13 }$ and $\prod_{l=1}^k\alpha_l\equiv 1\pmod 8$. Then for  any $k\times k$ symmetric matrix $B$ over $\bF_2$ with every row sum $0$ and rank $k-1$, we have
\[\#C_k(x,\alpha,B)\sim \frac1{2^{3k+\binom{k}2+1}}\cdot\# C_k(x)\]
where $\binom k2$ is the binomial coefficient and $\sim$ means that the ratio of its two sides has limit $1$ as $x$ goes to infinity. 
\end{thm}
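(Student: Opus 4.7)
The plan is to argue inductively on $l = 1, 2, \ldots, k$, at each stage freezing $p_1 < \cdots < p_{l-1}$ and counting admissible $p_l$ in a suitable range. When $l < k$ the conditions on $p_l$ reduce to: (i) the single residue class $p_l \equiv \alpha_l \pmod{16}$; and (ii) the $l-1$ Legendre symbol values $\Leg{p_l}{p_j} = (-1)^{B_{lj}}$ for $j < l$. Since every $p_j \equiv 1 \pmod 4$, quadratic reciprocity gives $\Leg{p_l}{p_j} = \Leg{p_j}{p_l}$, which is a real Dirichlet character modulo $p_j$ evaluated at $p_l$. Thus the conditions on $p_l$ cut out a union of residue classes modulo $M_l := 16\,p_1\cdots p_{l-1}$ comprising the fraction $1/2^{l+2}$ of units, and the Siegel--Walfisz theorem --- with $p_l$ restricted to dyadic ranges $p_l \in (y_{l-1}, y_l]$ chosen so that $M_l$ grows slowly enough for uniformity to hold --- yields the expected asymptotic count.

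For the last prime $p_k$ one additionally imposes the quartic condition
\[\brBig{\frac{2d}{n/d}}_4 \Leg{2n/d}{d}_4 = (-1)^{\frac{n-1}{8} + \frac{d-5}{4}}.\]
Fix Gaussian primes $\pi_j \in \bZ[i]$ above each $p_j$; then $\Sqlr{\frac{\cdot}{p_j}}_4$ becomes an order-four character on $(\bZ[i]/\pi_j)^\times$. Once $p_1, \ldots, p_{k-1}$ and the preceding Legendre data are all fixed, the left side of the displayed equation collapses to a single quartic character on $p_k$ relative to a ray class modulus of $\bQ(i)$, prescribing $p_k$ to lie in a specific coset of density $1/2$. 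By higher (quartic) reciprocity this translates to a Chebotarev splitting condition in an extension of $\bQ(i)$ linearly disjoint from the cyclotomic-plus-quadratic compositum $\bQ(\zeta_{16}, \sqrt{p_1}, \ldots, \sqrt{p_{k-1}})$ already encoding the earlier data, and the PNT in this compositum contributes the extra factor $1/2$.

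Multiplying the factors yields the claimed asymptotic
\[\#C_k(x,\alpha,B) \sim \#C_k(x) \cdot \prod_{l=1}^{k} 2^{-(l+2)} \cdot \tfrac{1}{2} = \#C_k(x)/2^{3k + \binom{k}{2} + 1}.\]
The analytic counting is standard once the character conditions are made explicit; the main difficulty lies in the algebraic verification that the quartic character on $p_k$ is genuinely independent of the Legendre and mod-$16$ conditions already in force. This is exactly where the hypotheses on $B$ (symmetric of rank $k-1$ with all row sums zero) and on $\alpha$ (with $\prod \alpha_l \equiv 1 \pmod 8$) are decisive: together they guarantee both that $By = \fb$ is solvable (so the auxiliary $d = \prod p_l^{z_l}$ is well defined) and that, via R\'edei-type reciprocity identities, the resulting quartic character is nontrivial on the kernel of the earlier constraints --- so its density is $1/2$ rather than $0$ or $1$.
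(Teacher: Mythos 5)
Your algebraic density bookkeeping is right (and agrees with the paper's), but the analytic engine you propose cannot drive the argument. The moduli $M_l = 16\,p_1\cdots p_{l-1}$ that appear when one fixes the smaller primes and counts the largest one are \emph{not} bounded by any fixed power of $\log x$: after Cremona--Odoni's Lemma 3.1 (quoted in the paper as Lemma~\ref{lem-Cremona}), the main contribution comes from $N\epsilon \in (\mu,\nu]$ with $\nu = \exp\brbig{\log x/(\log\log x)^{100}}$, so the modulus is a small but genuine power of $x$. Siegel--Walfisz gives uniformity only for $q \le (\log x)^A$, and no dyadic dissection rescues this, because restricting $p_1,\ldots,p_{k-1}$ to ranges where $M_l \le (\log x)^A$ discards all but an $o(1)$ fraction of $C_k(x)$. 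The paper handles this precisely by going through the explicit formula for $\psi(x,\chi)$ over $\bQ(i)$ (equations~(2.1)--(2.2)), splitting the character sum into the three pieces $(I)$, $(II)$, $(III)$ of Lemma~\ref{lem-3sum}, and invoking Page's theorem (Proposition~\ref{thm-Fogels}(2)) so that at most one conductor $\dag_1$ carries a possible Siegel zero, whose contribution is then controlled by Siegel's lower bound $N\dag_1 > (\log\nu)^{100}$. Without something equivalent to this Siegel-zero bookkeeping your count has an uncontrolled error.

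Secondly, your induction is structured differently from the paper's and the difference matters. You propose to induct in the index $l$, layering conditions one prime at a time from smallest to largest. The paper instead follows Cremona--Odoni: it passes through a bijection to the Gaussian set $C'_k(x,\alpha,B)$, defines $\vphi$ by dividing out the \emph{largest} prime factor $\tdeta$, and writes $\#C'_k(x,\alpha,B) = \sum_{\epsilon \in T'(x)} g(\epsilon)$, where $T'(x)$ encodes the conditions on the first $k-1$ primes only. The induction is on $k$, and the base of the density count for the auxiliary set $T(x)$ is imported from Rhoades~\cite{rhoades20092}. The advantage is that only the single prime $\tdeta$ (with norm in the "good" range $(N\tdep, x/N\epsilon]$) is ever counted by an equidistribution theorem at each step, which is what makes the large-modulus analysis tractable. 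Your outline also glosses over the transition from rational primes to primary Gaussian primes and from prime elements to prime ideals; the paper needs the exact sequence $1 \to \bZ[i]^\times \to (\bZ[i]/\fc)^\times \to I(\fc)/P_\fc \to 1$ and the counting Lemmas~\ref{lem-k=1-noAt} and~\ref{lem-noAt} to make "density $1/2$" for the quartic condition precise at the ray-class level, rather than a Chebotarev statement in a compositum of radical extensions. The conceptual point you identify at the end --- that the hypotheses on $B$ and $\alpha$ guarantee solvability of $By = \fb$ and independence of the quartic character --- is correct, but the paper establishes this directly through the consistency check $i^{2\delta} = \Leg{2}{\alpha_k}$ inside Lemma~\ref{lem-noAt}, not through reciprocity identities external to the ray-class computation.
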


Rhoades \cite{rhoades20092} 
claimed  a special case of above Theorem. Moreover he proved an independence of residue symbol property with the method of Cremona-Odoni \cite{cremona1989some} over $\bQ$. For Theorem \ref{mainthm2}, we have to extend the method of  Cremona-Odoni to $\bQ(i)$ because of the quartic residue symbol, whence parallel results like the explicit formula for $\psi(x,\chi)$, Siegel Theorem and Page Theorem are needed. Moreover, we have to transit  from primes to prime ideals and  deal with  some  difficulty in counting  certain residue classes, this can be best seen in the case $k=1$ (\S3.1). 

Since we will use analytic number theory, we will use many standard symbols in analytic number theory, such as $\sim, o(\cdot), O(\cdot), \ll, \pi(x), \Li(x), \psi(x)$, they can be find in any book on analytic number theory, for example Iwaniec-Kowalski \cite{iwaniec2004analytic}.

In the end of this introduction, we give the organization of this paper.  We devote Section 2 to give some preliminary results. Concretely, in \S2.1 we summarize the method of Cremona-Odoni in a   simpler case; Since many residue symbols are used, we give their definition and prove some properties in \S2.2; Those parallel analytic number theory results are enumerated in \S2.3. With these preparation, we prove Theorem \ref{mainthm2} in Section 3, especially we split out the case $k=1$ to outstand the main difficulty beside the idea of Cremona-Odoni in \S3.1. The distribution  result is carried out  in Section 4.

\section{Preliminary Section}
\subsection{Basic Idea}\quad

Since the method of Cremona-Odoni \cite{cremona1989some} plays an important role in our proof of independence of residue symbol property. Now we explain their basic idea in a much simpler case:
\[\#C_k(x)\sim\frac{1}{(k-1)!} \cdot \frac x{\log x}\cdot (\log\log x)^{k-1} \]
where $C_k(x)$ denotes all square-free positive integer $n\le x$ with exact $k$ prime factors. For $k=1$ this is prime number theorem.

For $k\ge2$ their key idea is to consider the induction map
\[C_{k}(x) \oset\vphi\longrightarrow C_{k-1}(x),\; n\mapsto n/\tdn\]
where $\tdn$ is the maximal prime divisor of $n$.  Note $t\in C_{k-1}(x)$ is in the image of $\vphi$ if and only if  there is a prime $p$ such that $\tdt<p\le xt^{-1}$, thus we get:
\[\#C_{k}(x)=\sum_{t\in C_{k-1}(x)} \# \Brbig{ p \text{ prime } \big| \; \tdt< p\le xt^{-1}}\]
Then the following Lemma 3.1 of Cremona-Odoni \cite{cremona1989some} implies that  only $t\in (\mu, \nu]\cap C_{k-1}(x) $ contributes to the main term of $\#C_k(x)$ with $\mu=(\log x)^{100} , \nu=\exp\brBig{\frac{\log x}{(\log\log x)^{100} }}$:
\begin{lemma}\label{lem-Cremona}
If either $m=20, n=\mu$, or  $m=\nu, n=x^{\frac {k-1}{k}}$, then we have:
\begin{eqnarray*}
\sum_{m<t\le n}^* \Li(xt^{-1})&=&
 o\brBig{\frac{x \cdot (\log\log x)^{k-1}}{\log x} }\\
\sum_{\mu<t\le \nu}^* \Li(xt^{-1})&\sim&
\frac1{k-1}\cdot \#C_{k-1}(x)\cdot \log\log x
\end{eqnarray*}
Where  $\uset{{a<t\le b} } { \oset*\sum }f(t):= {\uset{t\in (a,b]\cap C_{k-1}(\infty)}\sum } f(t)$.
\end{lemma}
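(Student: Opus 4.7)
The plan is to reduce both asymptotics to the Landau--Selberg estimate
\[\#C_j(y)\sim\frac{y\,(\log\log y)^{j-1}}{(j-1)!\,\log y}\qquad (y\to\infty)\]
(for each fixed $j\ge 1$), together with the approximation $\Li(x/t)\approx x/(t\log x)$, which is available throughout because $t\le x^{(k-1)/k}$ keeps $\log(x/t)$ comparable to $\log x$.

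First I would strip off the logarithm: writing $\log(x/t)=\log x-\log t$ and using that $\log t\le\tfrac{k-1}{k}\log x$, one has
\[\Li(x/t)=\frac{x}{t\log x}\cdot\brBig{1+O\brlr{\frac{\log t}{\log x}}}\]
throughout the entire range of summation. This reduces the problem to estimating the weighted sums $\sum^{*}_{a<t\le b} t^{-1}$ and multiplying by $x/\log x$, with the $O$-term contributing only a lower-order correction.

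Next I would evaluate these weighted sums by Abel summation against $\#C_{k-1}(u)$; after the substitution $v=\log\log u$ the main integral becomes $\int v^{k-2}\,dv/(k-2)!$, giving
\[\sum_{a<t\le b}^{*}\frac1t\sim\frac1{(k-1)!}\Sqlr{(\log\log b)^{k-1}-(\log\log a)^{k-1}}\]
whenever $a,b\to\infty$. For the main range $\mu<t\le\nu$ one has $\log\log\nu=\log\log x-100\log\log\log x$ and $\log\log\mu=\log\log\log x+O(1)$, so the bracket equals $(\log\log x)^{k-1}(1+o(1))$. Multiplying by $x/\log x$ and comparing with the Landau--Selberg asymptotic for $\#C_{k-1}(x)$ then recovers exactly $\tfrac1{k-1}\#C_{k-1}(x)\log\log x$.

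For the two tail ranges I would show that the bracket $(\log\log b)^{k-1}-(\log\log a)^{k-1}$ is $o((\log\log x)^{k-1})$: in the small range $20<t\le\mu$ both endpoints evaluate to $(\log\log\log x)^{k-1}$ up to $O(1)$, while for $\nu<t\le x^{(k-1)/k}$ the mean value theorem bounds the difference by $100(k-1)(\log\log x)^{k-2}\log\log\log x$, again negligible against $(\log\log x)^{k-1}$. The main obstacle is therefore not conceptual but entirely a matter of bookkeeping: one must verify that the exponents $100$ in the definitions of $\mu$ and $\nu$ are large enough that the error $O(\log t/\log x)$ and the iterated-log gaps $\log\log\mu$ and $\log\log x-\log\log\nu$ are all of lower order uniformly on each of the three intervals.
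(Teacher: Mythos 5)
Your argument is correct and is essentially the standard Cremona--Odoni approach: partial summation of $1/t$ against $\#C_{k-1}(u)\sim u(\log\log u)^{k-2}/((k-2)!\log u)$, reducing to $\int v^{k-2}\,dv$ after $v=\log\log u$, with the three ranges handled by comparing the gaps in $\log\log$. The paper itself does not prove the lemma but cites it as Lemma~3.1 of Cremona--Odoni, so there is no in-text proof to diverge from; one small slip is the claim that \emph{both} endpoints of the range $20<t\le\mu$ give $(\log\log\log x)^{k-1}$ up to $O(1)$ --- at $t=20$ one only has $(\log\log 20)^{k-1}=O(1)$ --- but the resulting bound $O\brbig{(\log\log\log x)^{k-1}}=o\brbig{(\log\log x)^{k-1}}$ still follows.
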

Whence we reduce to estimate $\uset{\mu<t\le\nu}{ \oset*\sum }\pi(xt^{-1})$. From prime number theorem we only need to estimate 
$\sum_{\mu<t\le \nu}^* \Li(xt^{-1})$,  then  Lemma \ref{lem-Cremona} and induction give
\[\#C_k(x)\sim\frac{1}{(k-1)!} \cdot \frac x{\log x} (\log\log x)^{k-1}\]

For the problem considered by Cremona-Odoni \cite{cremona1989some}, they have to use  $\psi(xt^{-1},a,q)$ instead of $\pi(xt^{-1})$. This forces they have to use explicit formula of $\psi(x,\chi)$, which brings the additional difficulty to deal with possible Siegel zeros in  the error term. Due to Page's Theorem, Siegel zeros are so rare that the sum of all the trivial estimation of $\psi(xt^{-1},\chi)$ with possible Siegel zero contributes to an error term.

Comparing with Cremona-Odoni, we have to deal with corresponding multiplicative number theory over $\bQ(i)$.
\subsection{ Residue Symbols}\quad

In this subsection, we will introduce several residue symbols that will be widely used in this paper.

For $\lambda$ a prime in Gaussian integers $\bZ[i]$ coprime with $1+i$ and $\alpha$ a Gaussian integer,  the quartic residue symbol $\Leg{\alpha}{\lambda}_4$   is defined to be  the unique element in $\Brbig{ \pm 1, \pm i, 0 }$  such that
\[ \alpha^{\frac{\lambda\bar\lambda-1}4} \equiv\Leg\alpha\lambda_4 \pmod \lambda \]
holds over $\bZ[i]$, where $\overline\lambda$ is the   conjugate of $\lambda$. The reference is Ireland-Rosen \cite{ireland1982classical}.

For two Gaussian primes $\lambda_1, \lambda_2$,  we easily deduce that
\[\Leg{\lambda_1}{\bar{\lambda_2}}_4 \Leg{\bar{\lambda_1}}{\lambda_2}_4=1\]
Moreover, we have the quartic reciprocity law
\[\Leg{\lambda_1}{\lambda_2}_4=\Leg{\lambda_2}{\lambda_1}_4 (-1)^{\frac{N\lambda_1-1}4\frac{N\lambda_2-1}4}\]
where $N$ denotes the norm from $\bQ(i)$ to $\bQ$.  If $\theta=\prod_{l=1}^k \lambda_l$ with $\lambda_l$ prime  and  coprime with $1+i$, we define\[\Leg\alpha\theta_4=\prod_{l=1}^k\Leg\alpha{\lambda_l}_4\]

We say an integer $\theta\in\bZ[i]$ is primary if  $\theta \equiv 1 \pmod{2+2i}$. Since we will frequently consider $\Leg2\theta_4$,   we compute it in the following lemma:
\begin{lemma}\label{lem-2quartic}
If $\theta=a+2bi$ is a primary integer with $a,b\in \bZ$,  then $\Leg2\theta_4=i^{-b}$.
\end{lemma}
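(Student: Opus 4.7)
The plan is to deduce $\Leg{2}{\theta}_4$ from the two supplementary laws of biquadratic reciprocity. Factoring $2 = -i(1+i)^2$ gives the multiplicative decomposition
\[
\Leg{2}{\theta}_4 \;=\; \Leg{i}{\theta}_4^{-1}\cdot \Leg{1+i}{\theta}_4^{2}.
\]
The classical supplementary laws (e.g.\ Ireland--Rosen, Ch.~9) supply $\Leg{i}{\theta}_4 = i^{(N\theta-1)/4}$ for $\theta$ coprime to $1+i$, and $\Leg{1+i}{\pi}_4 = i^{(s-t-t^2-1)/4}$ for any primary $\pi = s+ti$. The second formula extends to composite primary $\theta$ by multiplicativity, since the product of primary elements is primary.

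Applying these with $s=a$ and $t=2b$ and collecting terms, I would obtain $\Leg{2}{\theta}_4 = i^E$ with
\[
E \;=\; -\frac{(a-1)^2 + 12b^2 + 4b}{4} \;=\; -c^2 - 3b^2 - b, \qquad c := \tfrac{a-1}{2}\in\bZ.
\]
The remaining step is to reduce $E$ modulo $4$ using the primary hypothesis. A short direct check translates $\theta \equiv 1\pmod{2+2i}$ into the congruence $a + 2b \equiv 1\pmod 4$, which is equivalent to $c\equiv b\pmod 2$; hence $c^2\equiv b^2\pmod 4$, and
\[
E \;\equiv\; -b^2 - 3b^2 - b \;=\; -4b^2 - b \;\equiv\; -b\pmod 4,
\]
yielding the claimed identity $\Leg{2}{\theta}_4 = i^{-b}$.

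The main obstacle, such as it is, lies in this final mod-$4$ bookkeeping; the rest is simply unpacking the supplementary laws. The only slightly delicate point is that $\Leg{1+i}{\pi}_4$ is typically stated in the literature only for primary primes $\pi$, so to apply it to a general primary $\theta$ one either factors $\theta$ into primary primes (using closure of the primary condition under multiplication) or verifies directly that the right-hand side $i^{(s-t-t^2-1)/4}$ is multiplicative on primary elements—a verification that reduces to exactly the same mod-$4$ identity $c^2\equiv b^2\pmod 2$ used above.
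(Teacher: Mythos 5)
Your proof is correct, and it takes a genuinely different route from the paper's. The paper proves the lemma by induction on the number of prime factors of $\theta$: after discarding rational prime factors $p\equiv 3\pmod 4$ (which contribute trivially), it cites Iwaniec--Kowalski p.\ 53 for the base case $\theta$ a Gaussian prime, and for the inductive step shows directly that if $\theta_l=a_l+2b_li$ are primary then $\theta=\theta_1\theta_2=a+2bi$ has $b\equiv b_1+b_2\pmod 4$, using the primary condition in the form $a_l\equiv 1+2b_l\pmod 4$. You instead decompose $2=-i(1+i)^2$, invoke the two supplementary laws of biquadratic reciprocity, $\Leg{i}{\theta}_4=i^{(N\theta-1)/4}$ and $\Leg{1+i}{\pi}_4=i^{(s-t-t^2-1)/4}$, and carry out a single mod-$4$ reduction of the resulting exponent $E=-c^2-3b^2-b$ via the same primary congruence $c\equiv b\pmod 2$. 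Your route is closed-form and avoids the induction entirely, at the cost of invoking the (less elementary) $(1+i)$-supplementary law and needing to extend it multiplicatively from primes to composite primary $\theta$ — a point you flag correctly, though your remark that this verification reduces to ``exactly the same'' identity is a little loose (what is actually needed is that $(s-t-t^2-1)/4$ is additive modulo $2$ over products of primary elements, which suffices because the term appears squared; this is a routine but not identical mod-$8$ computation). Also note the typo in your last sentence: ``$c^2\equiv b^2\pmod 2$'' should read $c\equiv b\pmod 2$, matching the identity used two lines earlier. With those small repairs the argument is sound and, arguably, cleaner than the paper's, since it does not rely on an external reference for the prime base case.
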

\begin{proof}
If $\theta$ has rational prime factor $p$ congruent to $3 \pmod 4$, then $\Leg2{-p}_4=1$ and there must be even many such factors counted with multiplicity, so their product is congruent to $1\pmod4$. Hence we reduce to show that $\theta$ has only Gaussian prime factors.

Now we induct on the prime factors of $\theta$. If $\theta$ is a prime, then $\Leg2\theta_4=i^{-b}$, see  P53 of Iwaniec-Kowalski \cite{iwaniec2004analytic}.  For $\theta$ has $k\ge2$ prime factors, then $\theta=\theta_1\theta_2$ with $\theta_l$ a primary Gaussian integer having less than $k$  prime factors for $l=1,2$. Hence if we denote $\theta_l=a_l+2b_li$ with $a_l,b_l\in \bZ$, then induction implies that
\[\Leg2{\theta_l}_4=i^{-b_l}\] Hence $\Leg2{\theta}_4=i^{-(b_1+b_2)}$ by definition.
On the other hand,
\[a+2bi=\theta=\theta_1\theta_2=a_1a_2-4b_1b_2+2(a_1b_2+a_2b_1)i\]
Therefore $b=a_1b_2+a_2b_2$.  Since $\theta_l$ is primary, we have $ a_l-2b_l\equiv  1\pmod 4$. Consequently $b\equiv (1+2b_1)b_2+(1+2b_2)b_1\equiv b_1+b_2 \pmod 4$. So by induction, the Lemma is proved.
\end{proof}

For $p$ a prime congruent to $1 \pmod 4$, there are exactly two primitive  primes $\lambda, \bar\lambda$ lying  above $p$ with $p=\lambda\bar\lambda$. For $q$ a rational integer with $\Leg pq=1$, then the two quartic residue symbol $\Leg q\lambda_4=\Leg q{\bar\lambda}_4=\pm1$. Then we use the symbol $\Leg qp_4$ to denote $\Leg q\lambda_4$ as in Jung-Yue \cite{yueJung2011eightrank}. Note this symbol has the convenience that we needn't choose which primary prime above $p$.  Moreover if $d$ is a positive integer with all prime factors congruent to $1 \pmod 4$, and $q$ such that $\Leg qp=1$ for any prime factor $p$ of $d$, then
\[\Leg qd_4:=\prod_{p|d} \Leg qp_4^{v_p(d)}\]
where $v_p(d)$ denotes the $p$-adic valuation of $d$.

For  future application, we introduce general Legendre symbol over $\bZ[i]$ as in Page 196 of Hecke \cite{heckeGTM77}: Let $\fp$ be a  prime ideal   coprime with  $(1+i)$, the general Legendre symbol $\Leg{\alpha}\fp$   is defined to be the unique element of $\Brbig{ \pm1, 0 }$ such that
\[\alpha^{\frac{N\fp-1}2}\equiv \Leg\alpha\fp \pmod \fp\]
holds. If $\lambda$ is the unique primary prime in $\fp$, we also denote
\[\Leg\alpha\lambda=\Leg\alpha\fp\]
If $\theta=\prod_{l=1}^k \lambda_l$ with $\lambda_l$ primary prime, we denote
\[\Leg\alpha\theta=\prod_{l=1}^k\Leg\alpha{\lambda_l} \]

Another residue symbol is needed, for $p$ a rational prime and $a$ a rational integer coprime with $p$, the additive Legendre symbol $\ALeg ap$ is $1$ if the Legendre symbol $\Leg ap=-1$,  else it is $0$. Similarly for $d$ a positive odd integer, we denote $\ALeg ad=1$ if the Jacobi symbol $\Leg ad=-1$ and $\ALeg ad=0$ if $\Leg ad=1$.

\subsection{Analytic results over number fields}\quad


Let $K$ be a  number field of degree $n$ with  discriminant $\Delta$  and  ring of algebraic integers  $\cO$. A non-zero element $\gamma\in K$ is  totally positive if  it is positive under all real embeddings. If $K$ has no real embedding, then  totally positive  means  non-zero.   For an integral ideal $\dag$ and $\gamma\in K$, the notation $\gamma\equiv 1\pmod \dag$ means that $\gamma\in \cO_\fp$ and $\gamma\equiv 1 \pmod{ \fp^{v_{\fp}(\dag)}}$ if $\fp\mid\dag$, where $\cO_\fp$ is the integer ring of the $\fp$-adic completion of $K$. Let $P_\dag$ be the group of  principal fractional ideals $(\gamma)$  with $\gamma$   totally positive and $\gamma\equiv 1\pmod\dag$, and $I(\dag)$ denote the set of all the fractional ideals that are coprime with $\dag$. We say $\chi$ is a character modulo an ideal $\dag$  if $\chi$ is a character induced from $I(\dag)/P_\dag$. Then  $\psi(x,\chi)$ is defined to be
\[\psi(x,\chi)=\sum_{N_K\fa\le x} \chi(\fa) \Lambda(\fa)\]
where $\fa$ runs over all the integral ideal with norm less than or equal to $x$ and $N_K$ denotes the norm from $K$ to $\bQ$, and $\Lambda(\fa)$ is the Mangoldt function
\begin{equation*}
\begin{cases}
\log N_K\fp& \text{if $\fa=\fp^m$ with $m\ge 1$ },\\
0 & \text{ else}.
\end{cases}
\end{equation*}
and $\chi(\fa)=0$ if $\fa$ is not coprime with $\dag$. For $\psi(x,\chi)$, we have the following   explicit formula ( see Iwaniec-Kowalski \cite{iwaniec2004analytic} P114):
\begin{prop}\label{mainthm-explicitformula}\quad\\
For $\chi$ a non-principal character mod $\dag$ and $1\le T\le x$, then \begin{equation}\label{eq:expl-iwane}
\psi(x,\chi)=-\sum_{|\Im\rho|\le T}\frac{x^\rho-1}\rho+
O\brBig{xT^{-1}\cdot\log x \cdot \log(x^n\cdot N\dag)}
\end{equation}
where $\rho$ runs over all the zeros of $L(s,\chi)$ with $0\le \Re\rho \le 1$ and $|\Im\rho|\le T$, and the implied constant depends only on $K$.
\end{prop}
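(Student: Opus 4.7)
The plan is to derive the explicit formula in the same spirit as the classical proof over $\bQ$ (Chapter 5 of Iwaniec--Kowalski), with modifications to accommodate the number field $K$. I would start from the Dirichlet series identity
\[-\frac{L'(s,\chi)}{L(s,\chi)}=\sum_{\fa}\chi(\fa)\Lambda(\fa)\,N_K\fa^{-s},\qquad \Re s>1,\]
and apply the truncated Perron formula on the line $\Re s=c$ with $c=1+1/\log x$, obtaining
\[\psi(x,\chi)=-\frac{1}{2\pi i}\int_{c-iT}^{c+iT}\frac{L'(s,\chi)}{L(s,\chi)}\,\frac{x^s}{s}\,ds+R_1(x,T),\]
with $R_1(x,T)\ll x T^{-1}\log x$ plus the usual boundary contribution.

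Next I would shift the contour to the left, to a vertical segment with $\Re s=-U$ for some fixed $U>0$. The rectangle encloses:  the pole of $1/s$ at $s=0$, which contributes $L'(0,\chi)/L(0,\chi)$; any trivial zeros inside the box; and all non-trivial zeros $\rho$ of $L(s,\chi)$ with $0\le\Re\rho\le 1$ and $|\Im\rho|\le T$, whose residues give $-\sum_\rho x^\rho/\rho$ (the $-1/\rho$ term in the proposition statement comes from the shift of the $x^s$ pole at $s=0$ relative to each $\rho$). What remains is to bound the integrals on the two horizontal segments $\Im s=\pm T$ and on the left vertical segment. For this one uses the Hadamard factorisation of $L(s,\chi)$ together with the functional equation for Hecke $L$-functions, which over the number field $K$ involves the conductor $|\Delta|\cdot N\dag$ and $n$ gamma factors (one per archimedean place). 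This gives, away from zeros,
\[\frac{L'(s,\chi)}{L(s,\chi)}=\sum_{|\Im\rho-\Im s|\le 1}\frac{1}{s-\rho}+O\!\brBig{\log\brlr{(|\Im s|+2)^n\cdot N\dag\cdot|\Delta|}},\]
and by choosing $T$ slightly (by $O(1/\log\log)$) to stay a short distance from every zero one bounds the horizontal contributions by the error term claimed in the proposition.

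The numerical estimate uses the Riemann--von Mangoldt type zero-counting bound for Hecke $L$-functions,
\[N(T+1,\chi)-N(T,\chi)\ll \log\!\brlr{T^n\cdot N\dag\cdot|\Delta|},\]
which is where the factor $\log(x^n\cdot N\dag)$ appears; the dependence on $|\Delta|$ is absorbed into the implied constant because $K$ is fixed. The main obstacle is purely technical: one has to redo the standard $\bQ$-estimates for $L'/L$, the zero-density bound, and the Hadamard product in the Hecke setting, tracking the exponent $n$ on $|\Im s|$ and the factor $N\dag$ through every step. None of this is conceptually new, and I would simply quote the corresponding statements from Lagarias--Odlyzko's effective Chebotarev paper, after which the contour shift and residue calculus proceed exactly as in the classical case.
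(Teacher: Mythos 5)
Your sketch reproduces the standard Perron-truncation and contour-shift argument for Hecke $L$-functions, which is precisely the content of the result the paper cites without proof (Iwaniec--Kowalski, p.~114), so your approach coincides with the paper's. The only minor slip is in your parenthetical account of the $-1/\rho$ in $(x^\rho-1)/\rho$: it arises from expanding the residue $-L'(0,\chi)/L(0,\chi)$ at the pole of $x^s/s$ via the Hadamard product as $\sum_\rho 1/\rho + O(\log N\dag)$, thereby regularising the sum so that a zero near $s=0$ contributes $O(\log x)$, rather than from any ``shift'' of the $s=0$ pole relative to the $\rho$'s.
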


For further application, we  introduce   Siegel's Theorem   and Page's Theorem over $K$. For Siegel's Theorem over $K$, we refer to  Fogels \cite{Fogels1963}, \cite{Fogels1965}, \cite{Fogels1968}, while for Page's Theorem over $K$  we refer to Hoffstein-Ramakrishnan \cite{hoffstein1995siegel}.
\begin{prop} \label{thm-Fogels}
\begin{enumerate}
\item[(1)]  For $\chi$ a character modulo $\dag$, and $D=|\Delta| N\dag>D_0>1$,
\begin{enumerate}
\item[(i)] there is a positive constant $c$( which only depends on $n$ ) such that in the region
\[\Re (s)> 1-\frac c{\log D(1+\ABlr{\Im (s)})}>\frac34\quad (*)\]
there is no zero of $L(s,\chi)$ with $\chi$ complex, for at most one real $\chi' $ there maybe a simple zero $\beta'$ of $L(s,\chi')$;
\item[(ii)] If $\beta'$ is the exceptional zero of the exceptional character $\chi'$ modulo $\dag$, then for any $\epsilon>0$, there is a positive constant $c(n,\epsilon)$ such that
    \[1-\beta'> c(n,\epsilon) D^{-\epsilon}\]
\end{enumerate}
\item[(2)] For any $z\ge 2$, and $c_0$ is a suitable constant, then  there is at most a real primitive character $\chi$ to a modulus $\dag$ with $N_K\dag\le z$  has a real zero $\beta$ satisfying $$\beta >1-\frac{c_0}{\log z}$$
\end{enumerate}
\end{prop}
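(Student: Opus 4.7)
The plan is to adapt the three classical techniques of Landau (zero-free region), Siegel (ineffective exceptional-zero bound), and Page (at most one near-$1$ zero) from Dirichlet $L$-functions over $\bQ$ to the Hecke $L$-functions $L(s,\chi)$ on ray classes modulo $\dag$ over $K$. The standard inputs over $K$---analytic continuation of $L(s,\chi)$ with a functional equation of analytic conductor $\ll D=|\Delta|N\dag$, the Hadamard product factorization of the completed $L$-function, and the density bound $\ll n\log D(1+|t|)$ for the number of zeros in a unit vertical window near the critical strip---are all standard and are precisely what drive the dependence of the constant $c$ on the degree $n=[K:\bQ]$.

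For part (1)(i), I would use the non-negativity identity $3+4\cos\theta+\cos 2\theta\geq 0$: after Euler expansion, the combination $-3\,\zeta_K'/\zeta_K(\sigma)-4\,\Re L'/L(\sigma+it,\chi)-\Re L'/L(\sigma+2it,\chi^2)$ has non-negative coefficients $\Lambda(\fa)N\fa^{-\sigma}\brlr{3+4\cos(t\log N\fa)+\cos(2t\log N\fa)}$. Bounding each of the three terms against its Hadamard product---retaining on the middle term only a putative zero $\rho$ close to $1+it$---and letting $\sigma\to 1^+$ optimally extracts the stated region. The argument fails only at $t=0$ with $\chi$ real (where the outer and middle terms coincide), producing the exceptional simple real zero $\beta'$ of at most one real $\chi'$; uniqueness of the exceptional character then follows by applying the same positivity inequality to the product $\zeta_K(s)L(s,\chi_1)L(s,\chi_2)L(s,\chi_1\chi_2)$ for two putative real exceptional characters.

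Parts (1)(ii) and (2) both revolve around that same auxiliary product $F(s)=\zeta_K(s)L(s,\chi_1)L(s,\chi_2)L(s,\chi_1\chi_2)$, which for real characters $\chi_1,\chi_2$ has non-negative Dirichlet coefficients (so $F(\sigma)\geq 1$ for real $\sigma>1$) and a simple pole at $s=1$ with residue $\ll(\log D)^3$ via the crude convexity bound $L(1,\chi)\ll\log D$. For Page (2), two distinct real zeros $\beta_1,\beta_2>1-c_0/\log z$ would make $F$ vanish twice inside a small interval $[\min\beta_i,1]$, while being bounded above there by the pole residue and from below by $1$ on $\sigma>1$; a mean-value estimate on that interval produces a contradiction once $c_0$ is small enough. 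For Siegel (1)(ii) I would run the classical dichotomy: either some comparison real primitive character $\chi_2$ satisfies $L(1,\chi_2)\gg_\epsilon D^{-\epsilon}$, giving the required bound effectively via $F(\beta')=0$, or every real primitive character has a near-$1$ zero, in which case pairing $\chi'$ with any fixed such $\chi_0$ and invoking the non-negativity $F(\beta')\geq 0$ yields the bound $1-\beta'\gg_\epsilon D^{-\epsilon}$ ineffectively.

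The main obstacle is bookkeeping rather than any conceptual novelty: every step has a classical rational analogue, but one must carefully carry the dependence on $n$ and $\Delta$ through the Hadamard product, the functional equation, and all prime-ideal counting estimates; the class number residue $\kappa_K$ of $\zeta_K$ at $s=1$ replaces the constant $1$ in the $\bQ$-case, and sums over rational primes become sums over prime ideals weighted by $\log N\fp$. The complete execution of this program is carried out in Fogels \cite{Fogels1963,Fogels1965,Fogels1968} for part (1) and in Hoffstein-Ramakrishnan \cite{hoffstein1995siegel} for part (2), from which the author quotes the results directly.
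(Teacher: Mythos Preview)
Your proposal is correct and matches the paper's treatment: the paper does not prove this proposition but simply quotes it from Fogels \cite{Fogels1963,Fogels1965,Fogels1968} for part (1) and Hoffstein--Ramakrishnan \cite{hoffstein1995siegel} for part (2), exactly as you note in your final paragraph. Your additional sketch of the Landau--Siegel--Page arguments over $K$ is accurate and goes beyond what the paper itself provides.
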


For future purpose,  the first term of the formula (\ref{eq:expl-iwane}) have to be estimated. Similar as classical case, we can get the following explicit version of formula (\ref{eq:expl-iwane}). \begin{equation}\label{eq:explicit-we need}
\psi(x,\chi)=-\frac{x^{\beta'}}{\beta'}+
R(x,T)
\end{equation}
with
\[R(x,T)\ll x\cdot\log^2(x\cdot N_K\dag)\cdot\exp\brBig{-\frac{c_1\log x}{\log |T\cdot N_K\dag|}}+xT^{-1}\log x\cdot\log \ABBig{x^n \cdot N_K\dag}+x^{\frac14}\log x\]
The term $-\frac{x^{\beta'}}{\beta'}$ occurs only if $\chi$ is a real character for which has a zero $\beta'$ (then must be unique and simple) with
\[\beta'>1-\frac {c_2}{\log N_K\dag}\]
where $c_2$ is a certain constant.

\section{Independence of residue symbol property}
To prove independence of residue symbol Theorem \ref{mainthm2}, we have to identify the set $C_k(x,\alpha,B)$ to a set counts certain integers over $\bQ(i)$. For this purpose, we introduce some notations.

Denote  $\cP$ to be the set of all primary primes in $\bZ[i]$ with imaginary part positive. Let $k\ge 1$, and $\alpha=(\alpha_1,\cdots,\alpha_k)$ with $\alpha_l\in\Brbig{ 1, 5, 9, 13 }$ and $\prod_{l=1}^k \alpha_l \equiv 1 \pmod 8$. For $B=B_{k\times k}$  a  symmetric $\bF_2$-matrix  with rank $k-1$ and every row sum $0$, we define $C_k'(x,\alpha,B)$   to be all $\eta=\prod_1^k\lambda_l$ satisfying
\begin{itemize}
\item $N\lambda_1<\cdots<N\lambda_k$ with $ \lambda_l\in\cP$ and $N\eta\le x$;
\item $N\lambda_l\equiv \alpha_l  \pmod {16}$ and $\Leg{N\lambda_l}{N\lambda_j}=(-1)^{B_{lj}}$ for all $l<j$;
\item $\Leg{\theta_2}{\theta_1}\Leg2{\eta}_4=
  (-1)^{\frac{\prod_1^k\alpha_j-1}8 + \frac{\prod_1^k\alpha_j^{z_j}-5}4 }$ where $\theta_1\theta_2=\eta$ and $\theta_1=\prod_1^k \lambda_l^{z_l}$.
\end{itemize}
where $z=(z_1,\cdots,z_k)^T\in \bF_2^k$ satisfying $Bz=\brlr{\ALeg2{\alpha_1}, \cdots, \ALeg2{\alpha_k}}^T$ with $z_1=1$.

For $n=p_1\cdots p_k\in C_k(x,\alpha,B)$ with $p_l$ arranged increasingly, and for any $p_l$ we chose the unique $\lambda_l\in \cP$ such that $p_l=\lambda_l\overline{\lambda_l}$. Then we claim that $\eta=\prod_{i=1}^k \lambda_l \in C_k'(x,\alpha,B)$. Note that $\eta$ obviously satisfies the first  and second conditions in defining $C_k'(x,\alpha,B)$. For the third: consider the $l$-th row of both sides of $Bz=\brlr{ \ALeg2{p_1},\cdots,\ALeg2{p_k}}^T$, we get
$$\sum_{j=1,j\not= l}^k z_j B_{lj}+z_l\sum_{j=1,j\not=l}^k B_{lj}=\ALeg2{p_l}, \quad B_{lj}=\ALeg{p_j}{p_l} \text{ if } j\not=l $$
since every row sum of $B$ is $0$. Then we have $\Leg{2n/d}{p_l}=1$ if $z_l=1$ and $\Leg{2d}{p_l}=1$ if $z_l=0$. Thus the notations $\Leg{2d}{n/d}_4, \Leg{2d}{n/d}_4$  are meaningful, according to their definition we have:
\begin{eqnarray*}
\Leg{2n/d}d_4 \Leg{2d}{n/d}_4&=&\Leg{2p_{t+1}\cdots p_k}{\lambda_1\cdots \lambda_t}_4
\Leg{2p_1\cdots p_t}{\lambda_{t+1}\cdots \lambda_k}_4  \\
&=&\Leg2\eta_4 \cdot \prod_{l=1}^t\prod_{j=t+1}^k \Leg{p_j}{\lambda_l}_4\Leg{p_l}{\lambda_j}_4  \\
&=&\Leg2\eta_4 \cdot \prod_{l=1}^t\prod_{j=t+1}^k \Leg{\lambda_j}{\lambda_l}_4\Leg{\overline{\lambda_j}}{\lambda_l}_4
\Leg{\lambda_l}{\lambda_j}_4\Leg{\overline{\lambda_l}}{\lambda_j}_4
\end{eqnarray*}
where we have assumed that $d=p_1\cdots p_{t}$ for simplicity of notation. Using quartic reciprocity law for $\Leg{\lambda_l}{\lambda_j}_4 $ and $\Leg{\overline{\lambda_l}}{\lambda_j}_4$ we get
\[ \Leg{\lambda_j}{\lambda_l}_4\Leg{\overline{\lambda_j}}{\lambda_l}_4
\Leg{\lambda_l}{\lambda_j}_4\Leg{\overline{\lambda_l}}{\lambda_j}_4
=\Leg{\lambda_j}{\lambda_l}  \Leg{\overline{\lambda_j}}{\lambda_l}_4
\Leg{\lambda_j}{\overline{\lambda_l}}_4 \]
From $\Leg{\overline{\lambda_j}}{\lambda_l}_4
\Leg{\lambda_j}{\overline{\lambda_l}}_4=1$ we obtain
\begin{equation}\label{eq:leg2dd'2d'd}
\Leg{2n/d}d_4 \Leg{2d}{n/d}_4=
\Leg2\eta_4 \Leg{\theta_2}{\theta_1}
\end{equation}
Thus $\eta\in C_k'(x,\alpha,B)$. From this  we obtain   a bijection
\begin{equation}\label{bij-Ck}
C_k(x,\alpha,B)\to C_k'(x,\alpha,B)
\end{equation} 
\quad\\

Now we divide the proof of Theorem \ref{mainthm2} into two cases according to $k=1$ or not. The reason  is twofold:  First this case will not use the method of Cremona-Odoni, second we can see the main difference  between Cremona-Odoni and our situation, which makes the proof of the case $k\ge2$ more natural and not too long.

\subsection{The case $k=1$}\quad

For the case $k=1$, then we have $\alpha_1\in \Brbig{ 1, 9 }$. Moreover only $B=0_{1\times 1}$ has rank $k-1=0$, so  $C_1'(x,\alpha_1,0)$ consists all primary primes $\lambda\in\cP$ such that:
$$N\lambda\le x,\quad N\lambda\equiv \alpha_1\pmod{16}, \quad \Leg2{\lambda}_4=(-1)^{\frac{\alpha_1-9}8}$$
If we let $A_{16}$ be those primary  classes $a$ of  $\bZ[i]/16\bZ[i]$ such that
\begin{itemize}
\item[(i)] $Na\equiv \alpha_1\pmod {16}$;
\item[(ii)] $\Leg2{a}_4=(-1)^{\frac{\alpha_1-9}8}$.
\end{itemize}
then by Lemma \ref{lem-2quartic} we yield
\begin{equation}\label{k=1:C1'}
\#C_1'(x,\alpha_1,0)=\half\pi'(x,A_{16},16)
\end{equation}
where $\pi'(y,A,\gamma)$ is the number of primes $\lambda$ in $\bZ[i]$ with $N\lambda\le y$ and $\lambda\in A \pmod \gamma$, and the additional factor $\half$ comes from $\lambda\in C_1'(x,\alpha,0)$ with positive imaginary part.

Since Dirichlet prime ideal Theorem over $\bQ(i)$ concerns prime ideals while our estimation concerns  prime elements, we have to bridge this gap by the following:  Let $\fc$   be the  ideal   ${16}\bZ[i]$,
then by Theorem 6.1 of Lang \cite{lang110gtm}  we have the following long exact sequence:
\begin{eqnarray}\label{k=1:exact-seq}
\xymatrix{
1\ar[r]^{} & \bZ[i]^\times \ar[r]^{} &
\brBig{\bZ[i]/\fc}^\times \ar[r]^{f} &
I(\fc)/P_\fc\ar[r]^{} & 1    }
\end{eqnarray}
where $f$ is induced by mapping every $\fc$-invertible Gauss integer $a$ to the class generated by $(a)$. In fact, the exactness of (\ref{k=1:exact-seq}) can be verified directly using $\bZ[i]$ having class number $1$. Furthermore, we introduce the notation  $\pi(y,\fA,\fa)$ to denote all those prime ideal lies in the classes $\fA$  modulo $P_\fa$ with norm less or equal to $y$.

Now we can transit to prime ideals: let $\fA_{16}$ be the image of $A_{16}$ under $f$, then we get
\begin{equation}\label{k=1:pi'}
\pi'(x,A_{16}, {16}) = \pi (x, \fA_{16}, \fc)
\end{equation}
This is because for any prime ideal $(\lambda)$ in the class of $\fA_{16}$, there are exactly four primes lie  in $(\lambda)$ but with exact one of them  primary, whence
\begin{equation}\label{k=1:nocAepsilon}
\#\fA_{16}=\#A_{16}
\end{equation}
From Dirichlet prime ideal theorem over $\bQ(i)$ ( see Proposition \ref{mainthm-explicitformula}), we get
\[\pi (x, \fA_{16}, \fc) \sim \frac{\#\fA_{16}}{\#I(\fc)/P_\fc}\cdot \Li(x)\]
Let $\phi(16)$ be the number of $\brBig{\bZ[i]/16\bZ[i]}^\times$, then from the exact sequence (\ref{k=1:exact-seq}) we have $\#I(\fc)/P_\fc=\frac{\phi(16)}4$, whence from (\ref{k=1:C1'}) and (\ref{k=1:pi'}) we obtain:
\[\#C_1'(x,\alpha_1,0)\sim \frac{2\#\fA_{16}}{\phi(16)}\cdot\Li(x)\]

Then according to the following Lemma \ref{lem-k=1-noAt}, we get
\[\#C_1'(x,\alpha_1,0)\sim \frac1{2^4}\cdot\Li(x)\]
Note   $\#C_1(x)\sim\Li(x)$ and the bijection (\ref{bij-Ck}), we finish the proof of Theorem \ref{mainthm2} in the case $k=1$.

\begin{lemma}\label{lem-k=1-noAt} The cardinality of $A_{16}$ is
${\phi({16})}/{2^{5}}$.
\end{lemma}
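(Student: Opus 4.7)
Since $\phi(16)=128$, the target is to show $|A_{16}|=128/32=4$, so I would establish this by an explicit enumeration of the primary residue classes satisfying (i) and (ii). The first step is to parameterize the primary classes in $\bZ[i]/16\bZ[i]$: writing $\theta = a + 2bi$ with $a\in\{1,3,\ldots,15\}$ and $b\in\{0,1,\ldots,7\}$, primarity is equivalent to $a$ odd together with $a-2b\equiv 1\pmod 4$, exactly as appears at the end of the proof of Lemma \ref{lem-2quartic}. A quick count confirms there are $32=\phi(16)/|\bZ[i]^\times|$ such classes, as expected from the fact that each class in $(\bZ[i]/16)^\times$ has a unique primary representative modulo units.

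Next I would apply Lemma \ref{lem-2quartic} to rewrite condition (ii) as $i^{-b}=(-1)^{(\alpha_1-9)/8}\in\{\pm 1\}$. This forces $b$ to be even, and the primarity constraint then restricts $a$ to lie in $\{1,5,9,13\}$ (i.e.\ $a\equiv 1\pmod 4$) while $b$ ranges over $\{0,2,4,6\}$. The precise sign in (ii) further pins down $b$: when $\alpha_1=9$ one needs $b\in\{0,4\}$, and when $\alpha_1=1$ one needs $b\in\{2,6\}$. This leaves $4\cdot 2=8$ candidate classes satisfying primarity together with (ii).

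Finally I would impose condition (i), namely $N\theta=a^2+4b^2\equiv \alpha_1\pmod{16}$. Since $b$ is even the term $4b^2$ is divisible by $16$, so (i) reduces to $a^2\equiv \alpha_1\pmod{16}$. For $a\in\{1,5,9,13\}$ one computes $a^2\pmod{16}\in\{1,9,1,9\}$, so exactly two of the four admissible $a$ satisfy the congruence (whichever of $\alpha_1=1$ or $\alpha_1=9$ holds). Combining with the two admissible values of $b$ yields $|A_{16}|=2\cdot 2=4$, as required. The argument is essentially bookkeeping; the only point demanding care is the parity constraint from primarity, which interacts with both the $b$-parity forced by (ii) and the vanishing of $4b^2\pmod{16}$ used in (i), but once the parametrization is set up the enumeration is immediate.
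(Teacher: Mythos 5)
Your proof is correct, and it takes a genuinely different route from the paper's. You perform a direct enumeration: parametrize the $32$ primary classes mod $16$ as $a+2bi$ with $a$ odd and $a-2b\equiv 1\pmod 4$, then intersect the constraints (ii) (which forces $b$ even and pins $b$ mod $4$, hence two choices of $b$, and in turn forces $a\equiv 1\pmod 4$) and (i) (which, because $4b^2\equiv 0\pmod{16}$ when $b$ is even, reduces to $a^2\equiv\alpha_1\pmod{16}$, leaving two of the four admissible $a$). This is clean and self-contained. The paper instead argues via character theory: it introduces $\chi_1(g)=i^{(Ng-1)/4}$ and $\chi_2(g)=\Leg2g_4$ on the primary subgroup $G$, proves $\chi_1^2=\chi_2^2$ with both of order $4$ and $\chi_1\neq\chi_2$, deduces that the group $G'$ they generate has order $8$ and is dual to $G/(\Ker\chi_1\cap\Ker\chi_2)$, and concludes that the simultaneous constraints (i)--(ii) cut out a coset of $\Ker\chi_1\cap\Ker\chi_2$, i.e.\ exactly $|G|/8=4$ classes. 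The trade-off is that your concrete bookkeeping is shorter for $k=1$, but the paper's character-theoretic formulation is designed to carry over verbatim to Lemma~\ref{lem-noAt} (the $k\ge 2$ case with modulus $16\epsilon$), where explicit enumeration of residue classes is no longer feasible; the paper reuses the same $\chi_1,\chi_2$ analysis there after splitting off the $\bZ[i]/\lambda_j$ factors by CRT. So both arguments are valid; yours is the more elementary for this lemma, the paper's is the one that generalizes.
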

\begin{proof}
By the definition of $A_{16}$, the class is  primary selects the subgroup $G=\overline{1+(2+2i)\bZ[i]}$ of the group $\brBig{\bZ[i]/16\bZ[i]}^\times$ which is four times of $G$ in cardinality. To determine those elements in $G$ selected by the conditions (i),(ii) we introduce two characters $\chi_j$ on $G$ defined by:
\[\chi_1(g)=i^{\frac{Ng-1}4},\quad \chi_2(g)=\Leg2{g}_4\]

Then conditions (i),(ii) are equivalent to find those $g\in G$ such that
\begin{equation}\label{k=1:chi-i}
\chi_1(g)=i^{\frac{\alpha_1-1}4},\quad \chi_2(g)=(-1)^{\frac{\alpha_1-9}8}
\end{equation}

This reminds us to study the behavior of $\chi_i$ over $G$. We can easily deduce that $\chi_j^2(g)=\Leg2{Ng}$ with $j=1,2$. Whence $\chi_1^2=\chi^2_2$ and they are characters of order $4$, since  $\chi_j^2(-1+2i)=-1$. Moreover we have $\chi_1(-1+2i)=i, \chi_2(-1+2i)=i^{-1}$ by Lemma \ref{lem-2quartic}. Therefore $\chi_1\not=\chi_2$. So the  character subgroup $G'$ generated by $\chi_1,\chi_2$   has $8$ elements.

Now we show that $G'$ is the dual group of $G/G_1\cap G_2$  with $G_i$ the kernel of $\chi_i$. We suffice to prove  $\#G/G_1\cap G_2=8$. From group isomorphism theorem, we only need to study $G/G_1$ and $G_1/G_1\cap G_2$. The first group $ G/G_1\simeq \mu_4$ as $\chi_1$ has order $4$, for the latter group  we have $\chi_2|_{G_1}: G_1/G_1\cap G_2\to \mu_4$, where $\mu_4$ is the   group of units of order $4$. Due to $\chi_1\not=\chi_2$ we know $\chi_2|_{G_1}$ is non-trivial. From $\chi_1^2=\chi_2^2$ we obtain
\[\chi_2(g_1)^2=\chi_1(g_1^2)=1, \quad g_1\in G_1\]
Whence $\chi_2|_{G_1}$ has order $2$ and $\#G_1/G_1\cap G_2=2$. Thus we get $\#G/G_1\cap G_2=8$. Therefore $G'$ is  the dual group of $G/G_1\cap G_2$ by counting cardinality.

Since $G'$ and $G/G_1\cap G_2$ are  dual groups, we can easily derive: there is a $g\in G$ with $\chi_j(g)=i^{x_j}$ for $j=1,2$  if and only if $i^{2x_1}=i^{2x_2}$ since $\chi_1^2=\chi_2^2$.  Note that $i^{\frac{\alpha_1-1}4}, (-1)^{\frac{\alpha_1-9}8}$ obviously satisfies this. Therefore there is a $g_0\in G$ such that (\ref{k=1:chi-i}) holds. Moreover by transition of $g_0$, we know all $g\in G$ satisfying (\ref{k=1:chi-i}) consists the subset $g_0(G_1\cap G_2)$, which selects an eighth of $G$.
\[\#A_{16}=\frac{\phi({16})}{2^{5}}\]
This completes the proof of the lemma.
\end{proof}

\subsection{The case $k \geqslant 2$}\quad

In this subsection we will use the method of Cremona-Odoni to prove Theorem \ref{mainthm2} with $k\ge 2$.

To define the similar map $\vphi$ as in \S2, we first define $T(x)$ to be all $n=p_1\cdots p_{k-1}\le x$ with $p_j$ arranged in ascending order such that
$$ p_l\equiv \alpha_l \pmod {16}, \;\Leg{p_l}{p_j}=(-1)^{B_{lj}}, 1\le l<j\le k-1 $$
From  independence of Legendre symbol property of Rhoades \cite{rhoades20092}, we have
\begin{equation}\label{eq:number-Tx}
\# T(x)\sim 2^{-\binom k2-2k+2} \cdot \# C_{k-1}(x)
\end{equation}
Similarly as $C_k'(x,\alpha,B)$ we define $T'(x)$ to be all $\eta=\lambda_1\cdots \lambda_{k-1}$ with $N\eta\le x$ and $\lambda_j\in\cP$ such that
$$ N\lambda_j\equiv \alpha_j \pmod {16}, \;\Leg{N\lambda_l}{N\lambda_j}=(-1)^{B_{lj}}, 1\le l<j\le k-1 $$ where we have arranged $N\lambda_l$   increasingly. Then we also have a bijection
\begin{equation}\label{bij-Tx}
T'(x)\longrightarrow T(x), \;\; \eta\mapsto N\eta
\end{equation}

Now we can prove Theorem \ref{mainthm2}:
\begin{proof}

Let $\tdeta\in\cP$ be the prime divisor of $\eta$ with maximal norm, then we define the map
\[\vphi: C_k'(x,\alpha, B)\to T'(x),\;\; \eta\mapsto  \eta/\tdeta \]
parallel as in \S2.  Now we divide into two cases according to $z_k=0$ or $1$.\\

For the case $z_k=0$: we know an $\epsilon=\prod_1^{k-1}\lambda_j \in T'(x)$ lies in the image of $\vphi$ if and only if there is a prime $\lambda\in\cP$ with $N\tilde\epsilon<N\lambda\le x/N\epsilon$ such that
\begin{itemize}
\item[(i):] $N\lambda\equiv \alpha_{k}  \pmod {16}$ and $\Leg {N\lambda}{N\lambda_j}=(-1)^{B_{jk}}$ with $1\le j\le k-1$;
\item[(ii):] $\brBig{\frac{2}{\lambda}}_4\Leg\lambda{\theta_1}=
\brBig{\frac2\epsilon}_4\Leg{\epsilon/\theta_1}{\theta_1}(-1)^{\frac{\prod_1^k\alpha_j -1}{8}+\frac{\prod_1^k\alpha_j^{z_j}-5}4}$.
\end{itemize}
here $\tilde\epsilon\in \cP$ is the  prime divisor of $\epsilon$ with maximal norm.

Thus from Lemma \ref{lem-2quartic}, there is a unique subset $A_\epsilon$ of invertible primary residue  classes modulo ${16\epsilon}$ such that for a prime $\lambda$: the integer $ \lambda\epsilon $ belongs to  $C_k'(x,\alpha,B)$ if and only if $\lambda$ lies in $\cP$ and   $ A_\epsilon \pmod {{16\epsilon}}$ with norm in $\left(N\tdep, \; x/N\epsilon\right]$. Whence we obtain
\begin{equation}\label{eq:indep-induction-k2=1}
\# C_k'(x,\alpha,B)=\sum_{\epsilon\in T'(x) } g(\epsilon)
\end{equation}
with
\[g(\epsilon)=\#\BrBig{ \lambda \text{ prime of } \bZ[i] \;\Big|\; \lambda\in \cP, \; \lambda\in A_\epsilon  (\mod {{16\epsilon}}),\; N\tdep<N\lambda\le x/N\epsilon  }\]
As $\#A_\epsilon$ is an important part in main term, we list in the following lemma with proof postponed in the end of this section.
\begin{lemma}\label{lem-noAt}Let $\phi({16\epsilon})$ be the number of
$\brBig{ \bZ[i]/{16\epsilon} \bZ[i]}^\times$, then
 \[\#A_\epsilon=\frac{\phi({16\epsilon})}{2^{k+4}}\]
\end{lemma}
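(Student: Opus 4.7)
The plan is to adapt the character-theoretic argument used in the case $k=1$ (Lemma \ref{lem-k=1-noAt}) by adjoining one quadratic character for each prime $\lambda_j$ dividing $\epsilon$. Let $G$ denote the group of invertible primary residue classes modulo $16\epsilon$, so $|G|=\phi(16\epsilon)/4$, and decompose it via Chinese remainder as $G=G_{16}\times\prod_{j=1}^{k-1}(\bZ[i]/\lambda_j)^\times$. On $G$ I would introduce the characters $\chi_1(g)=i^{(Ng-1)/4}$ and $\tilde\chi_2(g)=\Leg{2}{g}_4$, both of order $4$ and supported on $G_{16}$, together with, for each $1\le j\le k-1$, a quadratic character $\psi_j(g)$ supported on $(\bZ[i]/\lambda_j)^\times$ which encodes the rational Legendre condition $\Leg{Ng}{N\lambda_j}=(-1)^{B_{jk}}$. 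The three families of conditions defining $A_\epsilon$ then translate into prescribing the values of $(\chi_1,\tilde\chi_2,\psi_1,\ldots,\psi_{k-1})$: the first from $Ng\equiv\alpha_k\pmod{16}$, the $\psi_j$'s directly, and the value of $\tilde\chi_2$ read off from condition (ii) after dividing out the $\Leg{g}{\theta_1}=\prod_{z_j=1}\psi_j(g)$ part.

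The next step is to analyse the image of the combined map $(\chi_1,\tilde\chi_2,\psi_1,\ldots,\psi_{k-1}):G\to\mu_4\times\mu_4\times\mu_2^{k-1}$. Paralleling the $k=1$ proof, I would verify the single relation $\chi_1^2=\tilde\chi_2^2=\Leg{2}{Ng}$: directly, $\chi_1^2(g)=i^{(Ng-1)/2}=(-1)^{(Ng-1)/4}$, which equals $\Leg{2}{Ng}$ for $Ng\equiv 1\pmod 4$; and $\tilde\chi_2^2(g)=\Leg{2}{g}_{\bZ[i]}=\Leg{2}{Ng}$ by Lemma \ref{lem-2quartic} combined with multiplicativity of the Jacobi symbol. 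Since $\chi_1,\tilde\chi_2$ are supported on $G_{16}$ while each $\psi_j$ is supported on a disjoint CRT factor, the $\psi_j$'s are independent of one another and of $\chi_1,\tilde\chi_2$. Hence the image has order $8\cdot 2^{k-1}=2^{k+2}$, and every non-empty fibre has cardinality $|G|/2^{k+2}=\phi(16\epsilon)/2^{k+4}$.

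It remains to verify that the prescribed tuple actually lies in the image, i.e.\ that the squares of the prescribed values of $\chi_1$ and $\tilde\chi_2$ agree. The $\chi_1$-side gives $\Leg{2}{\alpha_k}$; the $\tilde\chi_2$-side unwinds, using the hypothesis $\prod_l\alpha_l\equiv 1\pmod 8$ and the defining relation $Bz=\bigl(\ALeg{2}{\alpha_1},\ldots,\ALeg{2}{\alpha_k}\bigr)^T$ together with the explicit form of the RHS of condition (ii) in the definition of $C_k'(x,\alpha,B)$, to the same $\Leg{2}{\alpha_k}$. The main obstacle I anticipate is correctly realising the rational Legendre condition $\Leg{Ng}{N\lambda_j}=(-1)^{B_{jk}}$ as a condition on a single Gaussian character $\psi_j$ of $G$: this requires squared quartic reciprocity over $\bQ(i)$ to absorb the auxiliary $\Leg{g}{\bar{\lambda_j}}$-factor using the primary hypothesis and the quadratic reciprocity identity $\Leg{\lambda_1}{\bar\lambda_2}\Leg{\bar\lambda_1}{\lambda_2}=1$ recalled in \S2.2. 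Once that character-theoretic setup is rigorous, the asserted formula $\#A_\epsilon=\phi(16\epsilon)/2^{k+4}$ is a routine fibre-counting consequence of the preceding paragraph.
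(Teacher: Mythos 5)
Your proposal follows the paper's proof almost line for line: CRT over $16\epsilon$, quartic characters $\chi_1,\chi_2$ on the $\pmod{16}$ factor with the relation $\chi_1^2=\chi_2^2$ inherited from Lemma~\ref{lem-k=1-noAt}, a quadratic character on each factor $(\bZ[i]/\lambda_j)^\times$, independence across CRT components, and a final consistency check reducing both sides to $\Leg{2}{\alpha_k}$. The paper handles the $(\bZ[i]/\lambda_j)^\times$-factors first and then the mod-$16$ factor conditionally, whereas you bundle everything into one combined character map; that is a cosmetic difference, not a different argument.

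However, the obstacle you flag at the end is not a presentational worry — it is a genuine gap, both in your sketch and, as written, in the paper's proof. One has $\Leg{Na}{p_j}=\Leg{a}{\lambda_j}\Leg{a}{\bar\lambda_j}$ in $\bZ[i]$, and the second factor depends on $a\pmod{\bar\lambda_j}$, which is \emph{not} recorded in $a\pmod{16\epsilon}$ since $\bar\lambda_j\nmid 16\epsilon$. Thus there is no character $\psi_j$ on $(\bZ[i]/\lambda_j)^\times$ encoding the rational Legendre condition, and the reciprocity identity you invoke only rewrites $\Leg{a}{\bar\lambda_j}$ as $\Leg{\bar a}{\lambda_j}$ — still a function of $a\pmod{\bar\lambda_j}$, not of $a\pmod{\lambda_j}$. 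Concretely, take $\epsilon=\lambda_j=1+2i$: then $a=3+2i$ and $a''=a+16\epsilon(1+i)=-13+50i$ are both primary and congruent modulo $16\epsilon$, yet $\Leg{Na}{5}=\Leg{13}{5}=-1$ while $\Leg{Na''}{5}=\Leg{2669}{5}=1$. The paper's phrase ``the norm map induces an isomorphism $(\bZ[i]/\lambda_j\bZ[i])^\times\simeq(\bZ/p_j\bZ)^\times$'' is true as a statement about groups, but that isomorphism intertwines $\Leg{\cdot}{\lambda_j}$ with $\Leg{\cdot}{p_j}$, not $\Leg{N(\cdot)}{p_j}$. The repair is to define $A_\epsilon$ modulo $16N\epsilon=16\epsilon\bar\epsilon$, where $\Leg{Na}{p_j}$ is realized as the quadratic character $\Leg{a}{\lambda_j}\Leg{a}{\bar\lambda_j}$ on $(\bZ[i]/\lambda_j)^\times\times(\bZ[i]/\bar\lambda_j)^\times$; running the same independence-plus-one-relation count then gives $\#A_\epsilon=\phi(16N\epsilon)/2^{k+4}$, and since the ratio $\#A_\epsilon/\phi$ is unchanged the densities fed into the proof of Theorem~\ref{mainthm2} survive intact.
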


For simplicity, we introduce the notation  $\uset {N\eta\in A}{\oset{*}{\sum}} f(\eta)$ to denote $\sum_{\eta \in T(\infty), N\eta\in A} f(\eta)$ when $A\subset \bN$ as Cremona-Odoni.  Similarly as Cremona-Odoni, a Lemma parallel to Lemma \ref{lem-Cremona} also holds if we use $T'$ to substitute $C_{k-1}$, and  $\mu, \nu$ are defined the same as Lemma \ref{lem-Cremona}.\\


Now we can estimate $\#C_k'(x,\alpha,B)$ in equation (\ref{eq:indep-induction-k2=1}):\\

First for $\epsilon\in T(x)$ with $N\epsilon\le 20$, then :
$$ g(\epsilon)\le  \pi(x/N\epsilon)$$ since every prime ideal corresponds to exact $1$ primitive prime elements, where  \[\pi(y)=\sum_{N\fp\le y} 1\sim \Li(y)\]
by prime ideal Theorem over $\bQ(i)$.  So all these $\epsilon$ with $N\epsilon\le 20$ contribute at most $O(\frac x{\log x})$.

Second for $N\epsilon$ lies in $(20, \;\mu]$: similarly we have
$$g(\epsilon)=O\brBig{\Li(x/N\epsilon)}$$
Hence these $\epsilon$  contributes to
\[\sum_{20<t\le\mu} O\brBig{\Li(xt^{-1})}=O\brBig{\sum_{20<t\le \mu} \Li(xt^{-1})}=o\brBig{\frac x{\log x} \cdot (\log\log x)^{k-1}   }\]
by  Lemma \ref{lem-Cremona}.

Similarly for $N\epsilon$ belonging to $(\nu, x^{\frac {k-1}{k}}]$, they also contribute to $o\brBig{\frac x{\log x} \cdot(\log\log x)^{k-1}  }$. While for those $N\epsilon> x^{\frac {k-1}{k}}$, they have no contribution:  as in this case we have $N\tilde\epsilon> x^{\frac1k}$, but this contradicts that $N\tilde\epsilon<N\lambda\le x/N\epsilon<x^{\frac1k}$.

Consequently  we obtain:
\begin{equation*}
\#C_{k}'(x,\alpha,B)\sim \frac 12 \sum_{\mu<N\epsilon\le \nu}^* \brBig{\pi'(x/N\epsilon, A_\epsilon, {16\epsilon} )-\pi'(N\tdep, A_\epsilon,{16\epsilon} )}
\end{equation*}
Recall that $\pi'(y,A,\gamma)$ is defined under (\ref{k=1:C1'}). For the contribution of above latter terms we yield:
\[\sum_{\mu<N\epsilon\le\nu}^* \pi'(N\tdep,A_\epsilon,{16\epsilon} )\le \nu\cdot O\brBig{\frac \nu{\log \nu}}=O\brBig{\frac{\nu^2}{\log \nu}}=o\brBig{\frac x{\log x} \cdot (\log\log x)^{k-1}  }\] Therefore   we have
\begin{equation}\label{eq:CkxalphaB}
\#C_{k}'(x,\alpha,B)\sim \frac 12 \sum_{\mu<N\epsilon\le \nu}^*
\pi'(x/N\epsilon, A_\epsilon, {16\epsilon} )
\end{equation}


Similar as the case $k=1$, we define $\fc=\fc_\epsilon$ to be the ideal generated by ${16\epsilon}$, then  we have the following long exact sequence:
\begin{eqnarray}\label{exact-seq}
\xymatrix{
1\ar[r]^{} & \bZ[i]^\times \ar[r]^{} &
\brBig{\bZ[i]/\fc}^\times \ar[r]^{f} &
I(\fc)/P_\fc\ar[r]^{} & 1   }
\end{eqnarray}
If we denote $\fA_\epsilon=f(A_\epsilon)$, 
then similarly as (\ref{k=1:pi'}) and (\ref{k=1:nocAepsilon}) we get
\[\pi'(x,A_\epsilon, {16\epsilon})=\pi (x, \fA_\epsilon, \fc)\]
and
\begin{equation}\label{eq:nocAepsilon}
\#\fA_\epsilon=\#A_\epsilon
\end{equation} with $\pi(x,\fA_\epsilon,\fc)$ defined under (\ref{k=1:exact-seq}). 
Hence by equation (\ref{eq:CkxalphaB}) we reduce to estimate:
\begin{equation*}
{\sum_{\mu<N\epsilon\le \nu}^* \pi (x/N\epsilon, \fA_\epsilon, \fc )}
\end{equation*}

Then by the standard relation of $\pi(y,\fA,\fc)$ and $\psi (y,\fA,\fc)$, we only need to  estimate
\begin{equation}\label{eq:ind-psi(xt^{-1})}
{\sum_{\mu<N\epsilon\le \nu}^* \psi(x/N\epsilon, \fA_\epsilon,\fc)}
\end{equation}
where
\[\psi(y,\fA,\fc):=\sum_{N\fa\le y\atop \fa\in \fA \mod P_\fc} \Lambda(\fa)\]

By orthogonality of characters and above exact sequence (\ref{exact-seq}), we have
\begin{equation*}
\psi(y,\fA_\epsilon,\fc)=\frac4{\phi({16\epsilon})}  \sum_{\chi \mod \fc_\epsilon} \psi(y,\chi) \sum_{[\fa]\in \fA_\epsilon} \overline{\chi(\fa)}
\end{equation*}
where $\chi$ runs over all the characters of $I(\fc)/P_\fc $,
and \[\psi(y,\chi):=\sum_{N\fa\le y} \chi(\fa) \Lambda(\fa)\]
Consequently we divide the sum (\ref{eq:ind-psi(xt^{-1})}) into    three parts according to principal characters, non-principal characters of modulus multiple of $\dag_1$ or not:
\begin{eqnarray*}
\sum_{\mu<N\epsilon\le \nu}^* \psi(x/N\epsilon, \fA_\epsilon,\fc)&=&(I)+(II)+(III)\\
(I)&=& \sum_{\mu<N\epsilon\le \nu}^*\frac{4}{\phi({16\epsilon})}\cdot \#\fA_\epsilon\cdot \psi(x/N\epsilon, \chi_0) \\
(II)&=& \sum_{\mu<N\epsilon\le \nu,\atop \dag_1|\fc_\epsilon}^*\frac{4}{\phi({16\epsilon})}\sum_{\chi \mod\fc_\epsilon}'
\psi(x/N\epsilon,\chi)\sum_{[\fa]\in\fA_\epsilon} \overline{\chi(\fa)} \\
(III)&=&\sum_{\mu<N\epsilon\le \nu,\atop \dag_1\nmid\fc_\epsilon}^*\frac{4}{\phi({16\epsilon})}\sum_{\chi\mod \fc_\epsilon}'
\psi(x/N\epsilon,\chi)\sum_{[\fa]\in\fA_\epsilon} \overline{\chi(\fa)}
\end{eqnarray*}
where $\dag_1$ is the conductor of the exceptional primitive character in (2) of Proposition \ref{thm-Fogels} with $z=16^2\nu$, and  $\oset'\sum$ denotes all non-principal characters of a fixed modulus. These sums are estimated in the following lemma
\begin{lemma} \label{lem-3sum}
\begin{eqnarray*}
(I)&\sim & \frac1{(k-1)\cdot2^{k+2}}
\cdot \#T'(x)\cdot  \log x \cdot \log\log x \\
(II)&= & O\brBig{x\log^{-99}\nu } \\
(III)&=& o\brBig{\frac x{\log x}}
\end{eqnarray*}
\end{lemma}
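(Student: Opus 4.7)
The plan is to handle the three sums by three different analytic tools, parallel to Cremona-Odoni but transported to $\bQ(i)$: the prime-ideal theorem (in the form of the principal-character asymptotic $\psi(y,\chi_0)\sim y$) for $(I)$, the explicit formula of Proposition \ref{mainthm-explicitformula} in its off-exceptional form (\ref{eq:explicit-we need}) for $(III)$, and Page's theorem from Proposition \ref{thm-Fogels}(2) together with Siegel's theorem for $(II)$.

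For $(I)$, the crucial simplification is Lemma \ref{lem-noAt}: $\frac{4\#\fA_\epsilon}{\phi(16\epsilon)}=2^{-k-2}$ is constant in $\epsilon$. Uniformly in $\epsilon$ with $N\epsilon\le\nu$ one has $\psi(x/N\epsilon,\chi_0)=x/N\epsilon+o(x/N\epsilon)$, so $(I)$ reduces to $\frac{x}{2^{k+2}}\uset{\mu<N\epsilon\le\nu}{\oset*\sum} 1/N\epsilon$. Because $\log(x/N\epsilon)\sim\log x$ throughout $N\epsilon\le\nu$, the asymptotic $\Li(y)\sim y/\log y$ together with the $T'$-analog of Lemma \ref{lem-Cremona} converts $\uset{\mu<N\epsilon\le\nu}{\oset*\sum}\Li(x/N\epsilon)\sim\frac{1}{k-1}\#T'(x)\log\log x$ into $\uset{\mu<N\epsilon\le\nu}{\oset*\sum}(x/N\epsilon)\sim\frac{\log x}{k-1}\#T'(x)\log\log x$, which yields the stated asymptotic for $(I)$.

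For $(III)$, the hypothesis $\dag_1\nmid\fc_\epsilon$ means no character modulo $\fc_\epsilon$ is induced from the exceptional character, so (\ref{eq:explicit-we need}) holds with the $-x^{\beta'}/\beta'$ term absent and $\psi(y,\chi)=R(y,T)$. Choosing $T=\exp\brbig{\sqrt{\log x}}$ and using $N\fc_\epsilon\le 16\nu$ makes $R(y,T)\ll y\exp(-c\sqrt{\log x})$ uniformly. Trivially bounding $\bigl|\sum_{[\fa]\in\fA_\epsilon}\overline{\chi(\fa)}\bigr|\le \#\fA_\epsilon=\phi(16\epsilon)/2^{k+4}$ and summing over the $\phi(16\epsilon)-1$ non-principal characters cancels the $4/\phi(16\epsilon)$ prefactor (up to the constant $2^{-k-2}$), leaving $(III)\ll \exp(-c\sqrt{\log x})\cdot\uset{\mu<N\epsilon\le\nu}{\oset*\sum}(x/N\epsilon)=o(x/\log x)$. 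For $(II)$, the restriction $\dag_1\mid \fc_\epsilon=(16\epsilon)$ forces the odd part $\dag_1^*$ of the exceptional conductor to divide $(\epsilon)$; since Siegel's theorem (Proposition \ref{thm-Fogels}(1)(ii)) and Page's theorem make $N\dag_1^*$ sufficiently large, the number of admissible $\epsilon$ is $O(\nu/N\dag_1^*)$. Combining the trivial $|\psi(y,\chi)|\ll y$ with this rarity, and trading the ineffective lower bound on $N\dag_1^*$ for the effective $\log^{99}\nu$ savings afforded by Page's theorem at $z=16^2\nu$, yields $(II)=O(x\log^{-99}\nu)$.

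The main obstacle is $(II)$: one must control a single potential Siegel zero uniformly over the whole family of moduli $\fc_\epsilon$ in the range $(\mu,\nu]$, using Page's theorem to bound the number of $\epsilon$ hit by $\dag_1\mid\fc_\epsilon$ without spoiling the induction. This is the point at which the explicit-formula argument over $\bQ(i)$ is most delicate, and it is why the $\bQ(i)$-analogs of Siegel and Page in Proposition \ref{thm-Fogels} are unavoidable — the Cremona-Odoni mechanism by itself only gives the character-counting structure, but the analytic input from the number field is what forces $(II)$ into the error term.
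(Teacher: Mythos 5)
Your overall architecture matches the paper's — Lemma \ref{lem-noAt} plus the $T'$-analog of Lemma \ref{lem-Cremona} for $(I)$, Page and Siegel over $\bQ(i)$ for $(II)$, the explicit formula (\ref{eq:explicit-we need}) for $(III)$ — and your treatment of $(I)$ is fine. But as written both $(II)$ and, more seriously, $(III)$ have gaps.

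In $(III)$ the character accounting is wrong. With the triangle-inequality bound $\ABbig{\sum_{[\fa]\in\fA_\epsilon}\overline{\chi(\fa)}}\le\#\fA_\epsilon=\phi(16\epsilon)/2^{k+4}$, and summing over the (correctly counted) $\phi(16\epsilon)/4-1$ non-principal characters of $I(\fc_\epsilon)/P_{\fc_\epsilon}$, the net factor after the $4/\phi(16\epsilon)$ prefactor is $\approx\phi(16\epsilon)/2^{k+4}$, \emph{not} the constant $2^{-k-2}$ you claim: a leftover of size $\approx N\epsilon^{2}$ survives. This cannot be absorbed by the exponential savings — which, incidentally, are only of size $\exp\brbig{-c(\log\log x)^{100}}$ and not $\exp(-c\sqrt{\log x})$ as you assert, since in (\ref{eq:explicit-we need}) the quantity $\log\ABbig{T\cdot N_K\dag}$ is dominated by $\log N_K\dag\ll\log\nu$ when $N\epsilon$ is near $\nu$, so pushing $T$ up to $\exp(\sqrt{\log x})$ does not increase the savings. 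The missing idea is structural: by Lemma \ref{lem-noAt}, $A_\epsilon$ is a simultaneous level set of multiplicative characters, hence a coset of a subgroup, so $\fA_\epsilon$ is a coset of a subgroup of index $2^{k+2}$ in $I(\fc_\epsilon)/P_{\fc_\epsilon}$; consequently $\sum_{[\fa]\in\fA_\epsilon}\overline{\chi(\fa)}$ vanishes for all but $2^{k+2}$ of the $\chi$, so only $O_k(1)$ non-principal characters actually contribute per $\epsilon$. With that, the leftover $\phi(16\epsilon)$ disappears and the paper's choice $T=N\epsilon^{4}$ gives the three error terms and $(III)=o(x/\log x)$.

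In $(II)$, your rarity heuristic — (number of admissible $\epsilon$) times (trivial per-term bound) — yields $O\brbig{(\nu/N\dag_1)\cdot(x/\mu)}$, and since $\nu/\mu$ grows super-polynomially in $\log x$ this is far larger than $x\log^{-99}\nu$ regardless of the Siegel--Page lower bound on $N\dag_1$. The correct step is the harmonic sum: reparametrizing $N\epsilon=sN\dag_1$ gives $\sum_{\dag_1\mid\fc_\epsilon,\ \mu<N\epsilon\le\nu}1/N\epsilon\ll N\dag_1^{-1}\log\nu$, which combined with $N\dag_1>\log^{100}\nu$ from Proposition \ref{thm-Fogels} parts (1)(ii) and (2) at $z=16^{2}\nu$ gives $(II)\ll x\log^{-99}\nu$.
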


We postpone the proof of this Lemma. From this Lemma and the bijection (\ref{bij-Tx}) we arrive at:
\begin{eqnarray*}
\sum_{\mu<N\epsilon\le \nu}^* \psi(x/N\epsilon, \fA_\epsilon,\fc)\sim
\frac1{(k-1)\cdot2^{k+2}}
\cdot \#T(x)\cdot  \log x \cdot \log\log x
\end{eqnarray*}
Whence from the equations (\ref{eq:number-Tx}) and (\ref{eq:CkxalphaB}) we have
\begin{eqnarray*}
\#C_{k}'(x,\alpha,B)&\sim&
\frac1{(k-1)\cdot 2^{k+3}}\cdot\#T(x)\cdot\log\log x\\
&\sim& \frac1{(k-1)\cdot2^{\binom k2+3k+1}}
\cdot\log\log x\cdot \#C_{k-1}(x)\\
&\sim&\frac1{2^{\binom k2+3k+1}}\cdot \#C_{k}(x)
\end{eqnarray*}

For the case $z_k=1$ we can prove similarly. Thus we complete the proof of Theorem \ref{mainthm2} by noting the bijection (\ref{bij-Ck}).
\end{proof}

Now we  prove Lemma \ref{lem-3sum}:
\begin{proof}

The first sum (I): by equation (\ref{eq:nocAepsilon}) and Lemma \ref{lem-noAt}, we have  $\#\fA_\epsilon=\frac{\phi({16\epsilon})}{2^{k+4}}$.
Then  Lemma \ref{lem-Cremona} implies
\begin{eqnarray*}\label{sum-1}
(I)&=&\frac1{2^{k+2}} \sum_{\mu<N\epsilon\le \nu}^* \psi(x/N\epsilon)=\frac{1+o(1)}{2^{k+2}}\sum_{\mu<N\epsilon\le \nu}^* \log( x/N\epsilon)\Li (x/N\epsilon) \\
&=&\frac{1+o(1)}{2^{k+2}}\cdot \log x\sum_{\mu<N\epsilon\le \nu}^* \Li(x/N\epsilon)   \\
&\sim &   \frac1{(k-1)\cdot2^{k+2}}
\cdot \#T'(x)\cdot  \log x \cdot \log\log x
\end{eqnarray*}

For the second sum (II):   the trivial estimation gives:
\[(II) \ll   \sum_{\mu<N\epsilon\le \nu,\atop \dag_1|\fc_\epsilon}^*
\psi(x/N\epsilon)
\ll  x\sum_{\mu<N\epsilon\le \nu,\atop \dag_1|\fc_\epsilon}^* (N\epsilon)^{-1} \]
If we denote $\dag_1=\brBig{(1+i)^{e}\beta_1\cdots\beta_t}$ with $e\le 8, t\le k-1$ and $\beta_j\in \cP$, then for any $s$ with $\mu<s N\dag_1\le\nu$ there are exactly $2^{k-1-t}$ ideals $\fc'$ above $s$ with $\dag_1\fc'=\fc_\epsilon$ for some $\epsilon\in T'(\infty)$. Therefore
\begin{equation}\label{ineq:sum 2}
(II)\ll   x N\dag_1^{-1}  \sum_{\mu<sN\dag_1\le \nu}^* s^{-1} \le x N\dag_1^{-1}\cdot \log \nu
\end{equation}
But we  we may assume that
\[N\dag_1>(\log \nu)^{100}\]
As from Page Theorem part of Proposition\ref{thm-Fogels}, we chose $z=16^2\nu$, then if the Siegel zero exists,  we have the Siegel zero $\beta$ of modulus $\dag_1$ satisfying
\[\beta>1-\frac{c_0}{\log (16^2\nu)}\]
Then Siegel zero part of Proposition  \ref{thm-Fogels} implies that for any $\epsilon>0$, there is a $c(\epsilon,2)$ such that
\[\beta\le 1-c(\epsilon,2)D^{-\epsilon}\]where $D=4N\dag_1$.
Thus if we chose $\epsilon=200$, then $N\dag_1>\log^{100}\nu$.

Hence (\ref{ineq:sum 2}) implies
\[(II)\ll  x\log^{-99}\nu \]

For the third sum (III): there is no Siegel zero, so from the explicit formula (\ref{eq:explicit-we need}) we have for any $\psi(x/N\epsilon,\chi)$ in sum (III), there exist a positive constant $c$ such that
\[\psi(x/N\epsilon,\chi)\ll  
\frac x{N\epsilon}\cdot\log^2x\cdot\exp\brBig{-\frac{c\log (x/N\epsilon)}{\log {N\epsilon}}}+ \frac x{N\epsilon^{ 5}}\log ^2x+x^{\frac14}{N\epsilon}^{-\frac14}\log (x/{N\epsilon}) \]
where we have chosen $T={N\epsilon}^4$ and used  $N\dag\le 16^2{N\epsilon}$. Corresponding to these three terms, we arrive at
\[(III)=\Sigma_1+\Sigma_2+\Sigma_3\]
\begin{eqnarray*}
\Sigma_1&=&x\log^2x \sum_{\mu<N\epsilon\le \nu,\atop \dag_1\not|\fc_\epsilon}^*  \frac1{N\epsilon}\cdot \exp\brBig{-c \frac{\log x/N\epsilon}{\log {N\epsilon} }}\\
&\ll  & x\log^2x \cdot\exp\brBig{-c'(\log\log x)^{100} }\cdot
\sum_{\mu<N\epsilon\le \nu,\atop \dag_1\not|\fc_\epsilon}^*  \frac1{N\epsilon}   \\
&\ll  &x\log^3x \cdot\exp\brBig{-c'(\log\log x)^{100}  }
\end{eqnarray*}
\begin{eqnarray*}
\Sigma_2&=& x\log^2x  \sum_{\mu<N\epsilon\le \nu,\atop \dag_1\not|\fc_\epsilon}^*  {N\epsilon}^{-5}\ll  x\log^2x\cdot \mu^{-4}\ll  x\log^{-200}x \\
\Sigma_3&\ll  & x^{\frac14}\log x \sum_{\mu<N\epsilon\le \nu,\atop \dag_1\not|\fc_\epsilon}^* {N\epsilon}^{-\frac14}\ll  
x^{\frac14}\log x\cdot \nu^{\frac34}\ll  x^{\frac12}
\end{eqnarray*}
Hence we arrive at
\[(III)=o\brBig{\frac x{\log x}}\]
\end{proof}

Now we prove  Lemma \ref{lem-noAt}:
\begin{proof}
According to (i),(ii) and Lemma \ref{lem-2quartic}, we know that $A_\epsilon$ represents those primary class $ a\pmod {{16\epsilon}}$ such that
\begin{itemize}
\item[(i')] $N a\equiv \alpha_{k}  \pmod {16}$, and $\Leg {Na}{N\lambda_j}=(-1)^{B_{jk}}$ for $1\le j\le k-1$;
\item[(ii')] $\brBig{\frac {2}{ a}}_4\Leg a{\theta_1}=
\brBig{\frac2{\epsilon}}_4\Leg{\epsilon/\theta_1}{\theta_1} (-1)^{\frac{\prod_1^k\alpha_j -1}{8}+\frac{\prod_1^k\alpha_j^{z_j}-5}4}$.
\end{itemize}

From Chinese Remainder Theorem, we have the following identification:
\[ \brBig{\bZ[i] / {16\epsilon}\bZ[i] }^\times\simeq \brBig{\bZ[i]/16\bZ[i]}^\times \times \prod_{j=1}^{k-1} \brBig{\bZ[i]/\lambda_j\bZ[i]}^\times \]
given by $a\mapsto (a_0, a_1,\cdots, a_{k-1})$ with $a_j$ the corresponding image of $a$ modulo $\lambda_j$ and $\lambda_0:=16$. Then the residue symbol $\Leg{\cdot}{\theta_1}$ is trivial on those component $\lambda_j\nmid\theta_1$, and similarly for other residue symbols.  Hence the condition $\Leg {Na_j}{N\lambda_j}=1$ selects half of the $\brBig{\bZ[i]/\lambda_j\bZ[i]}^\times $-part, since the norm map induces an isomorphism 
$\brBig{\bZ[i]/\lambda_j\bZ[i]}^\times \simeq \brBig{\bZ/p_j\bZ}^\times$ as $p_j=N\lambda_j$ splits completely in $\bZ[i]$.

For the part of $\brBig{\bZ[i]/16\bZ[i]}^\times$, we use the same notation as in Lemma \ref{lem-k=1-noAt}. With $a_1,\cdots,a_{k-1}$ chosen such that $\Leg{Na_j}{N\lambda_j}=1$,  then the remaining conditions  of (i'),(ii') are equivalent to
\begin{equation}\label{eq:chi12-kernel}
\chi_1(g)=i^{\frac{\alpha_k-1}4}, \quad \chi_2(g)=i^\delta
\end{equation}
where $g\in G$ and $i^\delta=\brBig{\frac2\epsilon}_4
\Leg{\epsilon/\theta_1}{\theta_1} \cdot \prod_{\lambda_j|\theta_1}\Leg{a_j}{\lambda_j}\cdot(-1)^{\frac{\prod_1^k\alpha_j -1}{8}+\frac{\prod_1^k\alpha_j^{z_j}-5}4}$.

Similar as Lemma \ref{lem-k=1-noAt}, the existence of $g\in G$ such that (\ref{eq:chi12-kernel}) holds is equivalent to $i^{2\cdot \frac{\alpha_k-1}4}=i^{2\delta}$. Now we verify this:
\[i^{2\delta}=\Leg{2^2}\eta_4=\Leg2{N\eta}=\Leg2{\alpha_1\cdots\alpha_{k-1}}=\Leg2{\alpha_k}\]
as $\prod_{j=1}^k\alpha_j\equiv 1 \pmod 8$, while
$$i^{2\cdot\frac{\alpha_k-1}4}=(-1)^{ \frac{\alpha_k-1}4}=\Leg2{\alpha_k}=i^{2\delta}$$
Therefore the condition (\ref{eq:chi12-kernel}) selects  an eighth of $G$ similarly. Consequently
\[\#A_\epsilon=\frac{\phi({16\epsilon})}{2^{k+4}}\]
This completes the proof of the lemma.
\end{proof}

\section{Distribution of Congruent Elliptic Curves}

In this section, we will prove the main Theorem and the distribution of those congruent elliptic curves with rank $0$ and $2$-primary part of Shafarevich-Tate group isomorphic to $\brbig{\bZ/2\bZ}^4$. 

According to the strategy explained in the introduction, we first interpret some conceptions related to $8$-rank in Gauss genus theory. We refer to \S3 of our previous paper \cite{wzj2015congruent}. Let $n=p_1\cdots p_k\equiv 1 \pmod8$ in $Q_k$, denote $\cA=\cA_n$ the ideal class group of $\bQ(\sqrt{-n})$. Then the $2^j-$rank $h_{2^j}(n)$of $\cA$ is defined to be $\rank_{\bF_2}2^{j-1}\cA/2^j\cA$ with the multiplications in $\cA$ written additively, hence  $2^j\cA$ denotes the subgroup consisting of  $2^j$-power of elements in $\cA$.
Note that from definition we can easily get $h_4(n)=\rank_{\bF_2} \cA[2]\cap 2\cA$, where $\cA[2] $ denotes the subgroup consisting of elements with square trivial.
Then Gauss genus theory implies that there is a $2$ to $1$ epimorphism
\begin{equation}\label{epimor}
\theta: \Brbig{ X\in \bF_2^{k+1} \big| RX=0 }\longrightarrow \cA[2]\cap 2\cA
\end{equation}with $\theta(X_0)$ trivial,
where $X_0=(1,\cdots,1,0)^T$ and $R$ is a $k\times(k+1)$ matrix over $\bF_2$ defined by $$R=\brbig{A \big| \fb}$$
with $A=(a_{ij})_{k\times k}$ and $\fb=\brBig{\ALeg2{p_1},\cdots,\ALeg2{p_k}}^T$, here
$a_{ii}=\sum_{l\not=i} a_{il}, a_{ij}=\ALeg{p_j}{p_i}$ with $i\not=j$.
Whence we have $h_4(n)=k-\rank R$. 

To count the number of certain congruent  elliptic curves with $n\in Q_k$ conveniently,   we assume  that: 
\begin{center}
\emph{ The Redie matrix $R_n$ is defined with the $p_j$  arranged increasingly}
\end{center}
whence $R=R_n$ and $A=A_n$ is completely determined by $n$.

\subsection{Distribution of $\Sha(\En/\bQ)[2^\infty]\simeq \brbig{\bZ/2\bZ}^2$}\quad

Theorem 1 of \cite{wzj2015congruent} characterizes that $n\in P_k$ { if and only if  }
\begin{equation}\label{equivalence of Pk}
h_4(n)=1,\;h_8(n)\equiv \frac{d-5}4 \pmod 2
\end{equation}
where $d=\prod_{j=1}^k p_j^{x_j}$ with $X=(x_1,\cdots,x_k,x_{k+1})^T\not=X_0$ satisfying $RX=0$. 
Moreover two choice of $X_1,X_2$ doesn't affect $\frac{d-1}4\pmod 2$.
In fact, if $h_4(n)=1$ there are two cases according to $\rank A$ as in the proof of Theorem 1 of \cite{wzj2015congruent}:
\begin{itemize}
\item[(i)] If $\rank A_n=k-1$, let $x=(x_1,\cdots,x_k)$ be a non-trivial solution of $Ax=\fb$, then $d=\prod_{j=1}^kp_j^{x_j}$. Then Theorem 3.3(iii),(iv) of Jung-Yue \cite{yueJung2011eightrank} implies that  $h_8(n)=1 $ if and only if 
\[\Leg{2d}{d'}_4\Leg{2d'}d_4=
    (-1)^{\frac{n-1}{8}}\] with $n=dd'$. Then by equation (\ref{eq:leg2dd'2d'd}) this is equivalent to
    \[\Leg{\theta_2}{\theta_1}\Leg2n_4=(-1)^{\frac{n-1}8}\]
    where $\theta_1, \theta_2$  is the primary integer lying above $d,d'$ with every prime factor in $\cP$.
\item[(ii)]    If $\rank A=k-2$, let $x=(x_1,\cdots,x_k)\not=0, (1,\cdots,1)^T$ such that $Ax=0$, then $d=\prod_{j=1}^k p_j^{x_j}$ and $d\equiv 5 \pmod8$. Then Theorem 3.3(ii) of Jung-Yue \cite{yueJung2011eightrank} implies that $h_8(n)=1$ if and only if
    $$\Leg{d}{d'}_4\Leg{d'}d_4=-1$$ where $d'd=n$.
\end{itemize}
\begin{rem}
We remark that there is a typo in Theorem 3.3 (iv) of Jung-Yue \cite{yueJung2011eightrank}, which is corrected in above (i).
\end{rem}

According to this result and our strategy,
we use $\cB=\cB_k$   to denote all the $k\times k$ symmetric-$\bF_2$ matrix  with rank $k-1$ and   every row sum $0$, similarly use $\cB'$ to denote those with rank $k-2$. Now we divide into two cases according to $B\in\cB, \cB'$ respectively:\\

(a): For any $B\in\cB=\cB_k$ and any $\alpha=(\alpha_1,\cdots, \alpha_k)$ with $\alpha_i\in \Brbig{ 1, 5, 9, 13 }$ and $\prod_{j=1}^k \alpha_j\equiv 1  \pmod 8$: then the contribution of those $n\le x$ with $A_n=B,  p_j\equiv \alpha_j\pmod{16}$ to $\#P_k(x)$ is the number of $C_k(x,\alpha,B)$, where $n=p_1\cdots p_k$ with 
$p_j$ arranged in ascending order. Therefore all of these $B, \alpha$ contributes to $\#P_k(x)$ is
\[\Sigma_1=\sum_{B\in\cB}\;\;\sum_{\alpha\atop  8|\alpha_1\cdots \alpha_k-1} \#C_k(x,\alpha, B)\]
By independence of residue symbol Theorem \ref{mainthm2}, we know this asymptotically equals to
\[2^{-1-\binom{k}2-3k}\cdot \#C_k(x) \cdot \sum_{B\in\cB}\;\sum_{\alpha \atop 8|\alpha_1\cdots \alpha_k-1}1=2^{-2-k-\binom{k}2}\cdot\#C_k(x)\cdot \#\cB\]
So we reduce
to compute $\#\cB$, which can be accomplished by a result in  
Brown and many coauthors \cite{brown2006trivial}:
\begin{prop}[]\label{thm-brown et al} Let $\cB_{k,r}$ denote all the $k\times k$ symmetric matrix over $\bF_2$ of rank $r\le k$, then
\[\#\cB_{k,r}=2^{\binom{r+1}2}\cdot u_{r+1}\cdot\prod_{i=0}^{k-r-1}
\frac{2^k-2^i}{2^{k-r}-2^i}\]
where $u_r$ is defined in Theorem \ref{mainthm0}.
\end{prop}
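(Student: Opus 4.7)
The plan is to stratify rank-$r$ symmetric matrices by their kernel, reducing the count to (i) the number of $(k-r)$-dimensional subspaces of $\bF_2^k$ and (ii) the number $N_r:=\#\cB_{r,r}$ of non-degenerate symmetric $r\times r$ matrices. Concretely, for $M\in\cB_{k,r}$ the associated symmetric bilinear form $(x,y)\mapsto x^T M y$ has radical exactly $\ker M$, so it descends to a non-degenerate symmetric form on $\bF_2^k/\ker M\cong\bF_2^r$. Conversely, any codimension-$r$ subspace $W\subset\bF_2^k$ together with a non-degenerate symmetric form on $\bF_2^k/W$ pulls back to an $M\in\cB_{k,r}$ with $\ker M=W$. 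This bijection yields
\[
\#\cB_{k,r}=\binom{k}{k-r}_2\cdot N_r,
\]
and unwinding the standard product formula for the Gaussian binomial coefficient identifies $\binom{k}{k-r}_2$ with the factor $\prod_{i=0}^{k-r-1}(2^k-2^i)/(2^{k-r}-2^i)$ appearing in the statement. The theorem therefore reduces to the identity $N_r=2^{\binom{r+1}{2}}u_{r+1}$.

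To compute $N_r$, I would combine the global count $\sum_{j=0}^{r}\#\cB_{r,j}=2^{\binom{r+1}{2}}$ (the total number of symmetric $r\times r$ matrices over $\bF_2$) with the kernel decomposition above, obtaining the triangular recurrence
\[
\sum_{j=0}^{r}\binom{r}{j}_2 N_j=2^{\binom{r+1}{2}},\qquad N_0=1.
\]
This determines $N_r$ uniquely, and one verifies by induction that $N_r=2^{\binom{r+1}{2}}u_{r+1}$ satisfies it, the inductive step reducing to a telescoping identity for the partial products defining $u_{r+1}$. A more conceptual alternative, closer to the arguments in Brown et al.\ \cite{brown2006trivial}, is to classify non-degenerate symmetric bilinear forms on $\bF_2^r$ directly under the $\GL_r(\bF_2)$-action $M\mapsto P^T M P$: for $r$ odd there is a single orbit (represented by $I_r$), while for $r$ even there are exactly two, a hyperbolic/alternating one and a non-alternating one, stabilized respectively by the symplectic and orthogonal groups. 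Summing the orbit sizes $|\GL_r(\bF_2)|/|\mathrm{Stab}|$ against the classical-group orders reproduces $2^{\binom{r+1}{2}}u_{r+1}$.

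The main obstacle is the characteristic-$2$ phenomenon that blurs symmetric and alternating forms: in even dimension this produces two inequivalent non-degenerate classes with rather different stabilizers, and this asymmetry is precisely what forces the slightly irregular factor $u_{r+1}$ (a product of $(1-2^{1-2i})$) to appear rather than a more uniform expression. In the orbit-counting route the difficulty is organizing the orders of the orthogonal groups $O_r^\pm(\bF_2)$ in a parity-sensitive way, while in the recurrence route the difficulty is the combinatorial identity needed to invert the Gaussian-binomial triangular system. Either way, once $N_r=2^{\binom{r+1}{2}}u_{r+1}$ is established, the proposition follows immediately from the kernel stratification.
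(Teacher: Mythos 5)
The paper does not prove this proposition; it cites it directly from Brown et al.\ \cite{brown2006trivial}, so there is no in-paper argument to compare against. Your derivation is correct: the kernel stratification $\#\cB_{k,r}=\binom{k}{k-r}_2\cdot N_r$ is valid (the radical of the form $x^T M y$ equals $\ker M$ since $M$ is symmetric, and the pairing with codimension-$r$ subspaces is bijective), the identification of the Gaussian binomial with $\prod_{i=0}^{k-r-1}(2^k-2^i)/(2^{k-r}-2^i)$ is a straightforward cancellation, and the unitriangular recurrence $\sum_{j=0}^r\binom{r}{j}_2 N_j=2^{\binom{r+1}{2}}$ with $N_0=1$ uniquely pins down $N_r$ and is satisfied by $2^{\binom{r+1}{2}}u_{r+1}$ (one checks $N_1=1$, $N_2=4$, $N_3=28$, etc., and the inductive step closes). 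The orbit-counting alternative you sketch is also sound in outline, with the usual caveat that over $\bF_2$ in even dimension the non-degenerate classes split into the alternating (symplectic) and non-alternating ($\cong I_r$) orbits, whose stabilizer orders must be fed in carefully.
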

Note every  $B\in \cB$ corresponds to a  $B'\in \cB_{k-1,k-1}$ by summating  all rows to last row then delete last row, and similarly for column. Whence we have
\begin{equation}\label{eq:no-cB-k}
\#\cB=\#\cB_k=u_k\cdot 2^{\binom{k}2}
\end{equation}
So we arrive at
\[\Sigma_1\sim 2^{-2-k}\cdot u_k\cdot \#C_k(x) \]\quad\\

(b): For   $B\in \cB'$, we denote $\Sigma_B$ to be the set of    $\alpha=(\alpha_1,\cdots,\alpha_k)$ with $\alpha_i\in \Brbig{ 1, 5, 9, 13 }$ and $\prod_{j=1}^k\alpha_j \equiv 1 \pmod 8$ such that
\[\rank_{\bF_2} \brbig{B \big| \fb_\alpha}=k-1\]
where $\fb_\alpha=\brBig{\ALeg2{\alpha_1}, \cdots, \ALeg2{\alpha_k}}^T$. Then for $B\in \cB', \alpha\in\Sigma_B$, the contribution of  those $n=p_1\cdots p_k\le x$ with $p_1<\cdots <p_k, A_n=B$ and $ p_j\equiv \alpha_j\pmod{16}$ to $\#P_k(x)$ is the number of $C_k(x,\alpha,B)'$, which is defined to be all $n=p_1\cdots p_k\in C_k(x)$ with $p_j$ arranged increasingly   satisfying:
\begin{enumerate}
\item[(1)] $p_l\equiv\alpha_l \pmod{16}$;
\item[(2)] $\Leg{p_l}{p_j}=(-1)^{B_{lj}}, \; 1\le l<j\le k$;
\item[(3)] $\Leg d{d'}_4\Leg{d'}d_4=-1$.
\end{enumerate}
where $dd'=n$ and $d=\prod_{j=1}^k p_j^{x_j}$ with $x\not=0, (1,\cdots,1)^T$ a vector in $\bF_2^k$ such that $Ax=0$. Note that this case implies that  $k\ge 2$.

Before we count all the contribution of  $B\in \cB'$ and $\alpha\in \Sigma_B$, we first counting the number of $\cB'$ and $\Sigma_B$ respectively.

(b1): As every row sum of  $B\in\cB'$ is $0$, thus adding all rows to the last row then the last row is $0$ then delete the last row, similarly for the last column as $B$ is symmetric, thus we get a $B'\in \cB_{k-1, k-2}$, thus from Proposition \ref{thm-brown et al} we have
\begin{equation}\label{no-cB'}
\#\cB'=2^{\binom{k-1}2}\cdot u_{k-1}\cdot(2^{k-1}-1)
\end{equation}

(b2): For $B\in \cB'$, we want to count the number of those  $\bF_2$-matrix $\fb=\fb_{k\times 1}$ with column sum $0$ such that
\[\rank_{\bF_2}\brbig{B\big|\fb}=k-1\]
With similar elementary transforms  as $B$, then  $\brbig{B\big|\fb}$ corresponds to \[\brBig{B' \big| \fb' }\]
where $\fb'$ obtains from $\fb$ with no last term, then the rank of $\brBig{B' \big| \fb' }$ is $k-1$ means that $\fb'$ is not in the image of $B'$, thus there are exactly $2^{k-2}$ many $\fb'$. Whence there are $2^{k-2}$ such $\fb$. To every $\fb$ there are $2^k$ many $\alpha\in \Sigma_B$: as $b_j=\ALeg2{\alpha_j}$ determines $\alpha_j \pmod8$, then any $\alpha_j$ has exact two choice. Consequently we have
\begin{equation}\label{no-SigmaB'}
\#\Sigma_B=2^{2k-2}
\end{equation}

Similarly as Theorem \ref{mainthm2}, we have the following independence of residue symbol property for $\# C_{k}(x,\alpha,B)'$:
\begin{equation}\label{eq:indLeg-OurSituation-rk k-2 in 1 mod 4 all factor}
\# C_{k}(x,\alpha,B)' \sim 2^{-1-3k-\binom{k}2} \# C_k(x)
\end{equation}\quad
Thus   the contribution of all $B\in\cB'$ and $\alpha\in \Sigma_B$  is
\[\Sigma_2=\sum_{B\in \cB'}\;\sum_{\alpha\in \Sigma_B } \#C_k(x,\alpha,B)' \\
\sim 2^{-1-3k-\binom k2}\sum_{B\in \cB'}\;\sum_{\alpha\in \Sigma_B }\#C_k(x)\]
Then from (\ref{no-cB'}) and (\ref{no-SigmaB'}), $\Sigma_2$  asymptotically equals to
\[2^{-1-3k-\binom k2}\#\cB'\cdot\#\Sigma_B\cdot\#C_k(x)
=2^{-2-2k}(2^{k-1}-1)u_{k-1}\#C_k(x)\]

From (i),(ii) we know $\#P_k(x)=\Sigma_1+\Sigma_2$, therefore
\begin{eqnarray*}
\#P_k(x)&\sim&2^{-2-k}\brBig{u_k+ (2^{-1}-2^{-k})u_{k-1}} \#C_k(x)  \\
\lim_{x\to\infty}\frac{\#P_k(x)}{\#Q_k(x)}&=& \half \brBig{u_k+(2^{-1}-2^{-k})u_{k-1}}
\end{eqnarray*}
where we have used the fact that
\[\#Q_k(x)\sim 2^{-1-k}\cdot \#C_k(x)\]
which can be easily verified by independence of residue symbol property of Rhoades \cite{rhoades20092}. Whence we finish the proof of Theorem \ref{mainthm0}.

\subsection{Distribution of $\Sha(\En/\bQ)[2^\infty]\simeq \brbig{\bZ/2\bZ}^4$}\quad

In this subsection, we mainly discusss the distribution of congruent elliptic curves $\En$ with $\Sha(\En/\bQ)[2^\infty]\simeq \brbig{\bZ/2\bZ}^4$, see the following theorem:

\begin{thm}[]\label{mainthm3}
Let $\tdQ_k(x)$ be the set of positive squarefree integers $n\le x$ with exact $k$ prime factors and all prime factors of $n$ are congruent to $1\pmod8$, and $\tdP_k(x)$ consists of those  $n\in \tdQ_k(x)$ satisfying
	$$\rank_\bZ \En(\bQ)=0, \qquad \Sha(\En/\bQ)[2^\infty]\simeq \brbig{\bZ/2\bZ}^4$$ 
Then for any $k\ge2$ we have
	\[\lim_{x\to\infty}\frac{\#\tdP_k(x)}{\#\tdQ_k(x)}\ge
	 2^{k-4}\cdot \sum_{j_1+j_2=k\atop j_1,j_2\ge1} u_{j_1}u_{j_2}
	 \cdot {\binom{k}{j_1}}\cdot{2^{-j_1j_2}} \]
	where $u_j$ is defined in Theorem \ref{mainthm0}.
\end{thm}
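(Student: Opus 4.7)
The plan is to parallel the argument of \S4.1, but now adapted to $\tdQ_k$ (where every prime factor of $n$ is $\equiv 1\pmod 8$) and to the rarer event $\Sha(\En/\bQ)[2^\infty]\simeq (\bZ/2\bZ)^4$. Because each prime $p_i$ of $n$ is $\equiv 1\pmod 8$, its additive Legendre symbol $\ALeg{2}{p_i}$ vanishes, so $\fb=0$ and $R_n=(A_n\mid 0)$; in particular $h_4(n)=k-\rank_{\bF_2} A_n$. The first step is to use the results of \cite{wzj2015congruent} to give a sufficient genus-theoretic condition that forces $\rank_\bZ \En(\bQ)=0$ and $\Sha(\En/\bQ)[2^\infty]\simeq (\bZ/2\bZ)^4$. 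In analogy with (\ref{equivalence of Pk}), I expect this to amount to $h_4(n)=2$ together with two $h_8$-type conditions, one for each of the two nontrivial generators of $\cA[2]\cap 2\cA$, each expressible via a quartic residue symbol as in the two cases (i)--(ii) recalled in \S4.1.

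For the lower bound it suffices to count $n$ lying in a convenient subclass. Concretely, for each nonempty proper subset $S\subsetneq\{1,\dots,k\}$ of size $j_1$ (writing $j_2=k-j_1$), I would consider those $n=dd'$ with $d=\prod_{i\in S}p_i$ and $d'=\prod_{i\notin S}p_i$ for which
\[
\Leg{p_i}{p_j}=1 \qquad \text{for all } p_i\mid d,\; p_j\mid d',
\]
so that the cross block of $A_n$ vanishes, together with $A_S\in \cB_{j_1}$ and $A_{S^c}\in\cB_{j_2}$. These conditions force $A_n=A_S\oplus A_{S^c}$ of rank $k-2$, so $h_4(n)=2$, and they leave room to impose the additional quartic-residue-symbol condition(s) that force the desired $h_8$ behaviour along both factorisations $n=dd'$ and $n=d'd$.

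Step three is the counting. For each choice of $(S,B_S,B_{S^c})$ and admissible residue classes $\alpha_i\in\{1,9\}\pmod{16}$, the number of such $n\le x$ should be asymptotic to a fixed fraction of $\#C_k(x)$, by an analogue of Theorem \ref{mainthm2} in which the rank $k-1$ matrix is replaced by the block-diagonal symmetric matrix $B_S\oplus B_{S^c}$ of rank $k-2$. This analogue is obtained by essentially the same induction as in \S3.2: the map $\vphi$ that removes the largest prime still applies, and the character sum over $I(\fc)/P_\fc$ factors along the bipartition $S\sqcup S^c$, so the Cremona--Odoni method together with the explicit formula, Siegel, and Page inputs of \S2.3 apply mutatis mutandis. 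Summing the $\binom{k}{j_1}$ choices of $S$, the $u_{j_1}2^{\binom{j_1}{2}}$ choices of $B_S\in\cB_{j_1}$, the $u_{j_2}2^{\binom{j_2}{2}}$ choices of $B_{S^c}\in \cB_{j_2}$, and the admissible $\alpha$, and using the identity $\binom{k}{2}=\binom{j_1}{2}+\binom{j_2}{2}+j_1 j_2$ to package the powers of two, produces the factor $u_{j_1}u_{j_2}\binom{k}{j_1}2^{-j_1j_2}$; the overall $2^{k-4}$ then emerges after dividing by $\#\tdQ_k(x)\sim 4^{-k}\#C_k(x)$.

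The main obstacle is the first step: isolating a clean sufficient condition for $\Sha[2^\infty]\simeq(\bZ/2\bZ)^4$ in terms of $A_n$ and of quartic symbols attached to the two distinct nontrivial kernel vectors of $A_n$, and then verifying that this condition is implemented by (a slight strengthening of) the block-diagonal ansatz above. Once that is in place the counting is a routine extension of Theorem \ref{mainthm2}, with the main new combinatorial input being the factorisation of the relevant characters along the bipartition $S\sqcup S^c$; the analytic machinery of \S2.3 and the residue-symbol independence analysis of \S3 carry over essentially verbatim.
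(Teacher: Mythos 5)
Your plan is essentially the paper's proof: the paper also builds the lower bound from the subclass of $n=dd'$ with a bipartition $S\sqcup S^c$ of the $k+k'$ prime factors, trivial cross Legendre symbols, block matrices $A_d\in\cB_{j_1}$, $A_{d'}\in\cB_{j_2}$, and extra quartic residue conditions, and then counts via a variant of the independence theorem and sums over partitions and matrices to get the formula. The one thing you flagged as an obstacle is indeed the only gap, and the paper simply cites Theorem~1.2 and Remark~4 of \cite{wzj2015congruent} for it. You should be aware, though, that the imported sufficient condition is not phrased as ``two $h_8$-type conditions on $n$ attached to the two nontrivial kernel vectors of $A_n$'' as you guessed, but factors along the bipartition: (i) $\Leg{p}{p'}=1$ for all $p\mid d$, $p'\mid d'$; (ii) $h_4(d)=h_4(d')=1$; (iii) $\Leg2d_4=(-1)^{\frac{d-9}8}$, $\Leg2{d'}_4=(-1)^{\frac{d'-9}8}$, and $\Leg d{d'}_4=\Leg{d'}d_4=1$. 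That is, each factor $d,d'$ carries its own condition of the kind appearing in (\ref{equivalence of Pk}), and the coupling is the triviality of the cross quartic symbols. This multiplicative shape is exactly what makes the subsequent character sum in the Cremona--Odoni step factor along $S\sqcup S^c$, so the asymptotic $\#C_{k,k'}(x,\alpha,B,B',\sigma)\sim 2^{-4-3(k+k')-\binom{k+k'}2}\#C_{k+k'}(x)$ holds as you expected. After that, your bookkeeping (using $\binom{k}{2}=\binom{j_1}{2}+\binom{j_2}{2}+j_1j_2$, $\#\cB_{j}=u_j2^{\binom j2}$, and $\#\tdQ_k(x)\sim 4^{-k}\#C_k(x)$) reproduces the paper's final count.
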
 

Given $k,k'\ge1$,   Theorem 1.2 and Remark 4 of our previous paper \cite{wzj2015congruent} shows that $n\in \tdP_{k+k'}$ if $n=dd'\in \tdQ_{k+k'}$ with $\omega(d)=k, \omega(d')=k'$ such that
\begin{itemize}
\item[(i)] $\Leg{p}{p'}=1$ with $p,p'$ any prime divisor of $d,d'$ respectively;
\item[(ii)] $h_4(d)=h_4(d')=1$;
\item[(iii)] $\Leg2{d}_4=(-1)^{\frac{d-9}8},\; \Leg2{d'}_4=(-1)^{\frac{d'-9}8}$ and $\Leg{d}{d'}_4=\Leg{d'}{d}_4=1$.
\end{itemize}
In fact Theorem 4.1 and Corollary 1 of that paper \cite{wzj2015congruent} give  more general conditions make $n$ lie in $\tdP_{k+k'}$, but for easy of notation we just limited to (i)-(iii).

To count the contribution of those $n$ satisfying (i)-(iii), we need some notations. Let $k,k'\ge1$  and  $\sigma=\Brbig{\sigma_1,\cdots,\sigma_k}$ be an ascending subsequence of $1,\cdots,k+k'$ with exact $k$ elements, and $\sigma'=\Brbig{\sigma'_1,\cdots,\sigma_{k'}'}$ be the remained increasing subsequence of $1,\cdots,k+k'$ by deleting those elements of $\sigma$. Moreover we let $\cS$ to denote all these $\sigma$, then $\#\cS=\binom{k+k'}k$. We also let $\cR$ to denote the set of $\alpha=\brlr{\alpha_1,\cdots,\alpha_{k+k'}}$ with every $\alpha_j\in \Brbig{1,9}$, then $\#\cR=2^{2(k+k')}$.  For $\alpha\in\cR$, $B\in \cB_k, B'\in\cB_{k'}$ and $\sigma\in\cS$, we denote $C_{k,k'}(x,\alpha,B,B',\sigma)$ to be those $n=p_1\cdots p_{k+k'}\in\tdQ_{k+k'}(x)$ with $p_j$ arranged increasingly  such that
\begin{itemize}
	\item[(1)] $p_j\equiv \alpha_j\pmod{16}$ for $1\le j\le k+k'$;
	\item[(2)] $A_{d}=B, \;A_{d'}=B'$ with $d=\prod_{j=1}^k p_{\sigma_j},\; d'=\prod_{j=1}^{k'}p_{\sigma'_j}$;
	\item[(3)] $\Leg{p}{p'}=1$ with $p,p'$ any prime divisor of $d,d'$ respectively;
	\item[(4)] $\Leg2{d}_4=(-1)^{\frac{ \delta-9}8}, \;
	 \Leg2{d'}_4=(-1)^{\frac{ \delta'-9}8}$ with $\delta=\prod_{j=1}^k \alpha_{\sigma_j},
	 \delta'=\prod_{j=1}^{k'} \alpha_{\sigma_j'}$;
	 \item[(5)] $\Leg d{d'}_4=\Leg{d'}d_4=1$.
\end{itemize}
Then similar as independence of residue symbol Theorem \ref{mainthm2}, we have
\[\#C_{k,k'}(x,\alpha,B,B',\sigma)\sim \frac{1}{2^{4+3(k+k')+\binom{k+k'}{2}}}\cdot\#C_{k+k'}(x)\]
Moreover from (i)-(iii) we know $C_{k,k'}(x,\alpha,B,B',\sigma)$ is contained in $\tdP_{k+k'}(x)$. Therefore all the contribution of those $C_{k,k'}(x,\alpha,B,B',\sigma)$ with $\alpha\in \cR, \; B\in \cB_{k}, B'\in\cB_{k'}, \sigma\in \cS$ to $\tdP_{k+k'}(x)$ is
\begin{eqnarray*}
&&\sum_{\alpha\in\cR}\sum_{B\in\cB_k} \sum_{B'\in\cB_{k'}}\sum_{\sigma\in \cS}  \# C_{k,k'}(x,\alpha,B,B',\sigma)\\
	&\sim& \frac{\#\cR \cdot \#\cB_k\cdot\#\cB_{k'}\cdot \#\cS}{{2^{4+3(k+k')+\binom{k+k'}{2}}}}
	 \cdot\#C_{k+k'}(x)\\
	 &=&\frac{u_ku_{k'}\cdot \binom{k+k'}k\cdot 2^{2(k+k')+\binom k2+\binom{k'}2}}{{2^{4+3(k+k')+\binom{k+k'}{2}}}}
	 \cdot \#C_{k+k'}(x)\\
	 &=&\frac{u_ku_{k'}\cdot \binom{k+k'}k}{2^{4+k+k'+kk'}}\cdot\#C_{k+k'}(x)
\end{eqnarray*}
where we have used (\ref{eq:no-cB-k}). Note from independence of residue symbol property of Rhoades \cite{rhoades20092}, we have 
\[\#\tdQ_k(x)\sim 2^{-2k}\cdot\#C_k(x)\] 
Consequently we have
\[\lim_{x\to\infty}\frac{\#\tdP_k(x)}{\#\tdQ_k(x)}\ge
 2^{k-4}\cdot \sum_{j_1+j_2=k\atop j_1,j_2\ge1} u_{j_1}u_{j_2}
 \cdot {\binom{k}{j_1}}\cdot{2^{-j_1j_2}} \]

\textbf{Acknowledgements}\\

I am greatly indebted to Professor Ye Tian, my supervisor, for many  instructions and suggestions! I would like to thank Lvhao Yan for carefully reading the manuscript and giving valuable comments.

\bigskip

\noindent Zhangjie Wang, \\
 Yau  Mathematical Sciences Center, \\
Tsinghua University, Beijing 100084,China. \\
{\it {zjwang@math.tsinghua.edu.cn}  }

\end{CJK}

\begin{thebibliography}{99}
\bibitem{vatsal1998rank1}
V. Vatsal,  `Rank-one twists of a certain elliptic curve,'  {\em Mathematische
  Annalen}, vol.~311, no.~4(1998)pp.~791--794.

\bibitem{wzj2015congruent}
Z. Wang,  `Congruent elliptic curves with non-trivial Shafarevich-Tate group,'  {\em
  preprint}.

\bibitem{yueJung2011eightrank}
{H. Jung and  Q. Yue}, `$8$-ranks of class groups of imaginary quadratic
  number fields and their densities,' {\em J. Korean Math. Soc.}   vol. 48, no. 6(2011)pp. 1249--1268.


\bibitem{rhoades20092}
R.  Rhoades, `{$2$-Selmer groups and the Birch--Swinnerton-Dyer Conjecture
  for the congruent number curves},' {\em  J. Number Theory}, vol.~129,
  no.~6(2009)pp.~1379--1391.

\bibitem{cremona1989some}
J.  Cremona and R.~Odoni, `Some density results for negative Pell equations:
  an application of graph theory,' {\em  J. London Math. Soc. (2)}, vol.~39, no.~1(1989) pp.~16--28.

\bibitem{ireland1982classical}
{ {K. Ireland and M. Rosen }}, {\em A classical introduction to modern
  number theory},  Graduate Texts in Mathematics, vol. 84, second edition(1990, \newblock   Springer-Verlag)pp. xiv+389.

\bibitem{iwaniec2004analytic}
{H.~Iwaniec and E.~Kowalski}, {\em Analytic number theory}, AMS Colloquium Publications, vol.~53(2004,\newblock AMS Providence)pp. xii+615.


\bibitem{heckeGTM77}
{E. Hecke}, {\em Lectures on the theory of algebraic numbers}, Graduate Texts in Mathematics, vol. 77(1981, \newblock Springer-Verlag)pp. xii+239.

\bibitem{Fogels1963}
E.~Fogels, `{{\"U}ber die Ausnahmenullstelle der Heckeschen L-Funktionen},'
  {\em Acta Arith.}, vol.~8, no.~3(1963)pp.~307--309.

\bibitem{Fogels1965}
E.~Fogels, `{On the zeros of L-functions},' {\em Acta Arith.}, vol.~11,
  no.~1(1962)pp.~67--96.

\bibitem{Fogels1968}
E.~Fogels, `{Corrigendum to the paper "On the zeros of L-functions" (Acta
  Arith. 11(1965)pp. 67-96)},' {\em Acta Arith.}, vol.~14, no.~4(1968)
  pp.~435.

\bibitem{hoffstein1995siegel}
{J. Hoffstein  and  D. Ramakrishnan},  `Siegel zeros and cusp
  forms,'  {\em  Internat. Math. Res. Notices}, vol.~1995, no.~6(1995)
  pp.~279--308.

\bibitem{lang110gtm}
{S. Lang}, {\em Algebraic number theory}, Graduate Texts in Mathematics, vol.~110,
second eddition(1994, \newblock Springer-Verlag)pp. xiv+357.

\bibitem{brown2006trivial}
{M.  Brown, N.  Calkin, K.~James, A. King, S.~Lockard and R.  Rhoades},
`Trivial selmer groups and even partitions of a graph,'{\em {Integers}}, vol.~6, no.~A33
(2006)pp.17.
\end{thebibliography}
\end{document}